\newcommand{\NN}{{\rm N}}
\newtheorem{theorem}{Theorem}
\newtheorem{proposition}[theorem]{Proposition}
\newtheorem{corollary}[theorem]{Corollary}
\newtheorem{lemma}[theorem]{Lemma}
\theoremstyle{remark}
\newtheorem{example}[theorem]{Example}
\newtheorem{remark}[theorem]{Remark}
\def\d{\mathrm{d}}
\def\e{\mathrm{e}}
\def\g{\mathrm{g}}
\def\N{\mathbb{N}}
\def\Z{\mathbb{Z}}
\def\Q{\mathbb{Q}}
\def\G{\mathbf{G}}
\def\I{\mathbf{I}}
\def\msg{{\mathrm{ msg }}}
\def\CJ{\mathcal{J}}
\def\CL{\mathcal{L}}
\def\scrF{{\mathscr{F}}}
\def\scrP{{\mathscr{P}}}
\def\C{{\mathrm C}}
\def\ii{{\mathrm i}}
\def\m{{\mathrm m}}
\def\t{{\mathrm t}}
\def\F{{\mathrm F}}
\def\PF{{\mathrm{PF}}}
\def\Ap{\mathrm{Ap}}
\title{The complexity of a numerical semigroup}
\author{
J.\ I.\ Garc\'{\i}a-Garc\'{\i}a\footnote{
Departamento de Matem\'aticas/INDESS (Instituto Universitario para el Desarrollo Social Sostenible),
Universidad de C\'adiz, E-11510 Puerto Real (C\'{a}diz, Spain).
E-mail: ignacio.garcia@uca.es.}\\
M. A. Moreno-Frías\footnote{
Departamento de Matem\'aticas,
Universidad de C\'adiz, E-11510 Puerto Real (C\'{a}diz, Spain).
Partially supported by MTM2017-84890-P and by Junta de Andaluc\'{\i}a group FQM-298.
E-mail: mariangeles.moreno@uca.es.}
\\
J. C. Rosales \footnote{
    Dpto. de \'Algebra, Facultad de Ciencias, Universidad de Granada,
    E-18071, Granada. (Spain).
    Partially supported by MTM2017-84890-P and by Junta de Andaluc\'{\i}a group FQM-343.
    E-mail: jrosales@ugr.es.}
\\
A.\ Vigneron-Tenorio\footnote{
Departamento de Matem\'aticas/INDESS (Instituto Universitario para el Desarrollo Social Sostenible), Universidad de C\'adiz,
E-11406 Jerez de la Frontera (C\'{a}diz, Spain).
E-mail: alberto.vigneron@uca.es.}
}
\date{}
\begin{document}

\maketitle

\begin{abstract}
Let $S$ and $\Delta$ be numerical semigroups. A numerical semigroup $S$ is an $\I(\Delta)$-{\it semigroup} if $S\backslash \{0\}$ is an ideal of $\Delta$.
We will denote by $\CJ(\Delta)=\{S \mid  S \text{ is an $\I(\Delta)$-semigroup} \}.$
We will say that $\Delta$ is {\it an ideal extension of } $S$ if $S\in \CJ(\Delta).$
In this work, we present an algorithm that allows to build all the ideal extensions of a numerical semigroup. We can recursively denote by
$\CJ^0(\N)=\N,$ $\CJ^1(\N)=\CJ(\N)$ and $\CJ^{k+1}(\N)=\CJ(\CJ^{k}(\N))$ for all $k\in \N.$ The complexity of a numerical semigroup $S$ is the minimun of the set $\{k\in \N\mid S \in \CJ^k(\N)\}.$  In addition, we will give an algorithm  that allows us to compute all the numerical semigroups with  fixed multiplicity and complexity.
\end{abstract}

{\small

{\it Keywords:} numerical semigroup, ideal, extension, complexity, $\ii$-chain, $\ii$-pertinent map. \\

2020 {\it Mathematics Subject Classification:} 11B13,
11P70,
13P25,
20M12,
20M14.
}

\section*{Introduction}

\hspace{0.42cm}Let $\Z=\{0,1,-1,2,-2,\dots\}$ be the set of integer numbers and
$\N=\{x\in \Z\mid x\ge 0\}$. A {\it numerical semigroup} is a
subset $S$ of $\N$ containing the zero element, closed by the sum and such that    $\N\backslash
S=\{x\in \N \mid x \notin S\}$ is finite.

If $A$ is a subset nonempty of $\N$, we denote by $\langle A
\rangle$ the submonoid of $(\N,+)$ generated by $A$, the set
$\langle A \rangle=\{\lambda_1a_1+\dots+\lambda_na_n \mid n\in
\N\setminus \{0\}, \, \{a_1,\dots, a_n\}\subseteq A,\,
\{\lambda_1,\dots,\lambda_n\}\subseteq \N\}.$ By \cite[Lemma 2.1]{libro}, we know that $ \langle A \rangle$
is a numerical semigroup if and only if $\gcd(A)=1.$
If $S$ is a numerical semigroup  and $S=\langle A \rangle$, we
say that $A$ is a {\it system of generators} of $S$. Moreover, if $S\neq
\langle B \rangle$ for all $B \varsubsetneq A$, then $A$ is called a {\it minimal system of generators} of $S$. In
\cite[Corollary 2.8]{libro} is shown that every numerical semigroup  has a unique minimal system of generator which is always finite. We denote by $\msg(S)$ the minimal system of
generators of $S$. The cardinality of $\msg(S)$ is called the {\it
embedding dimension }of $S$ and will be denoted by $\e(S).$

If $S$ is a numerical semigroup, then $\mathrm{F}(S)=\mbox{max}(\Z\backslash S)$,
 $\mathrm{g}(S)=\sharp(\N\backslash S)$, where $\sharp A$ denote the
cardinality of a set $A$, and  $\mbox{m}(S)=\mbox{min}(S\setminus \{0\})$.
 They are  three important invariants of
$S$ which we are known as the {\it Frobenius number}, the {\it genus} and  the {\it multiplicity} of $S$, respectively.

The study of numerical semigroups is a clasical topic and it has been motivated by the named Frobenius problem (see \cite{alfonsin}). It consists on finding formulas that calculate the Frobenius number and the genus of a numerical semigroup from its minimal system of generators. This problem was solved by Sylvester (see \cite{sylvester}) for numerical semigroups with embedding dimension two. Nowadays, the problem remains unsolved for  numerical semigroups with embedding dimension equal to or greater than or  three.

Let $\Delta$ be a numerical semigroup. An ideal of $\Delta$ is a nonempty subset $I$ of $\Delta$ such that $I+\Delta=\{a+b\mid a\in I \mbox{ and }b\in \Delta\}\subseteq I.$
For every ideal $I$ of $\Delta$, the set $I\cup \{0\}$ is also a numerical semigroup. This fact leads us to give the following definition: a numerical semigroup $S$ is an $\I(\Delta)$-{\it semigroup} if and only if $S\backslash \{0\}$ is an ideal of $\Delta$. We  denote by $\CJ(\Delta)$ the set $\{S \mid  S \text{ is an $\I(\Delta)$-semigroup} \}$, and if $\scrF$ is a family of numerical semigroups, then  $\CJ(\scrF)$ denotes the set $\bigcup_{\Delta \in \scrF}\CJ(\Delta).$

The main motivation of this work is that ordinary numerical and elementary numerical semigroups are those with a simpler structure, in that order. In that line we will prove that $\CJ(\N)$ is the set of ordinary numerical semigroup and that  $\CJ^2(\N)=\CJ(\CJ(\N))$ is equal to the set of elementary numerical semigroup.

Following the idea of the previous paragraph we give the definition of complexity of a numerical semigroup as follows: the {\it complexity } of a numerical semigroup $S$, denoted by $\C(S)$,  is the number $\min\{k\in \N\mid S \in \CJ^k(\N)\}.$

Let $S$ and $\Delta$ be numerical semigroups. We  say that $\Delta$ is {\it an ideal extension of } $S$ whenever $S\in \CJ(\Delta).$
An {\it i-chain of length n} connecting the numerical semigroups $S$ and $T$ is a chain of numerical semigroups $S_0\subseteq S_1 \subseteq S_2 \subseteq \dots \subseteq S_n$ such that $S_0=S,$ $S_n=T$ and $S_i \in \CJ(S_{i+1})$ for every $i\in \{0,\dots, n-1\}.$

The content of this work is organized as follows.
In the first section, we give some definitions and recall some results.
In Section 2, we present an algorithm that allows to build all the ideal extensions of a numerical semigroup $S.$ Section 3 is devoted to prove that $\C(S)$ is the minimun of the lenghts of the $i$-chain connecting $S$ and $\N.$ In Section 4, we show how we can build one of these chains with minimun lenght. As a consequence, we obtain that $\C(S)=\left\lfloor{\frac{\F(S)}{\m(S)}}\right\rfloor+1$ (where $\lfloor q \rfloor$ is defined as $\max\{z\in \Z\mid z\leq q\}$ for every $q\in \Q$).
In Section 5, we give an algorithm to compute all the numerical semigroups with fixed multiplicity and complexity. Finally, we prove that the cardinal of the set of numerical semigroups with multiplicity $m$ and complexity $c$ is less than or equal to the cardinality of the set of numerical semigroups with multiplicity $m$ and complexity equal to $c+1$.

\section{Ideal extensions of a numerical semigroup}
Let $S$ and $T$ be  numerical semigroups. We will say that $T$ is {\it an ideal extension} of $S$ if $S\backslash \{0\}$ is an ideal of $T$. The ideal extensions of semigroups were introduced  in \cite{clifford} and thenceforth  they have been extensively studied (see, for instant \cite{grillet}).

Following the notation introduced in \cite{JPAA}, we say that an  integer $x$ is a {\it pseudo-Frobenius number} of a numerical semigroup $S$, if $x \notin S$ and $x+s \in  S$ for
all $s \in  S\backslash \{0\}.$  We denote by $\PF(S)$ the set of pseudo-Frobenius numbers of
$S$. The cardinality of $\PF(S)$ is an important invariant of $S$ (see, for instance \cite{barucci}) that is called the {\it type} of $S$ and is denoted by $\t(S)$.

The following result is found in \cite[Corollary 2.23]{libro}.

\begin{proposition}\label{proposition1}
	If $S$ is a numerical semigroup such that $S\neq \N,$ then $\t(S)\leq \m(S)-1.$
\end{proposition}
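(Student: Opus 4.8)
The plan is to exploit the definition of pseudo-Frobenius number directly, together with a counting argument based on residues modulo $\m(S)$. First I would fix $m=\m(S)$ and observe that every pseudo-Frobenius number $x\in\PF(S)$ satisfies $x\notin S$; in particular $x$ cannot be congruent to $0$ modulo $m$, since every nonnegative multiple of $m$ lies in $S$ (and negative ones would force $x+m\notin S$ with $m\in S\setminus\{0\}$, contradicting the pseudo-Frobenius condition). Hence the reduction map $r\colon\PF(S)\to\{1,\dots,m-1\}$ sending $x$ to $x\bmod m$ is well defined, and it suffices to show that $r$ is injective, which immediately yields $\t(S)=\sharp\PF(S)\le m-1$.

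For the injectivity, suppose $x,y\in\PF(S)$ with $x\equiv y\pmod m$ and, without loss of generality, $x<y$, so $y=x+km$ for some positive integer $k$. Since $m\in S\setminus\{0\}$ and also $km\in S\setminus\{0\}$, the defining property of a pseudo-Frobenius number gives $x+km\in S$, i.e.\ $y\in S$, contradicting $y\notin S$. Therefore $x=y$, and $r$ is injective. This is really the whole argument; the only point requiring a little care is the initial claim that no element of $\PF(S)$ is divisible by $m$, which I would handle exactly as above by applying the pseudo-Frobenius condition with the element $m$ (or a suitable positive multiple of $m$) of $S\setminus\{0\}$.

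The main obstacle, such as it is, is purely bookkeeping: one must be careful that $\PF(S)$ can contain negative integers (the Frobenius number itself is the largest pseudo-Frobenius number, but there is no lower bound a priori), so the residue map should be taken with values in $\{1,\dots,m-1\}$ rather than literally in a set of ``small'' integers, and the multiple $km$ used in the injectivity step must be a genuine element of $S$, which it is because every nonnegative multiple of $m$ belongs to $S$. No deeper structural facts about numerical semigroups are needed beyond $S\setminus\{0\}$ being closed under addition and containing $m$.
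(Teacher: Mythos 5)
The paper does not supply its own proof of this proposition; it cites~\cite[Corollary 2.23]{libro}, where the result is obtained from the Ap\'ery-set description of the pseudo-Frobenius numbers (Proposition~\ref{proposition6} in this paper): with $m=\m(S)$ one has $\PF(S)=\{w-m\mid w\in \mathrm{Maximals}_{\le_S}\Ap(S,m)\}$, and since $\Ap(S,m)$ has exactly $m$ elements and $0\in\Ap(S,m)$ is never maximal when $S\neq\N$, the bound $\t(S)\le m-1$ follows. Your argument reaches the same conclusion without invoking Ap\'ery sets: you show directly that reduction modulo $m$ gives an injection $\PF(S)\hookrightarrow\{1,\dots,m-1\}$. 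This is essentially the same count, but repackaged as a self-contained, elementary argument; the injectivity step (if $y=x+km$ with $k\ge 1$ then $km\in S\setminus\{0\}$ forces $y\in S$) is exactly right. The Ap\'ery-set route gives slightly more structure for free (each pseudo-Frobenius number is $w(i)-m$ for a distinct nonzero residue $i$), but your version is a perfectly valid and arguably cleaner way to get the inequality alone.

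There is one small gap in the well-definedness step. To rule out $x\equiv 0\pmod m$ you treat $x\ge 0$ correctly, but for $x<0$ you assert that $x+m\notin S$; this fails precisely at $x=-m$, where $x+m=0\in S$. Your proposed fallback of using ``a suitable positive multiple of $m$'' does not repair this case either, since $-m+km=(k-1)m\in S$ for every $k\ge 1$, so multiples of $m$ alone never produce the needed contradiction. The cleanest fix is to first observe that every pseudo-Frobenius number is positive: if $x\le 0$ and $x\in\PF(S)$, then $\F(S)-x>\F(S)$, so $\F(S)-x\in S\setminus\{0\}$, yet $x+(\F(S)-x)=\F(S)\notin S$, a contradiction. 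Once positivity is known, a pseudo-Frobenius multiple of $m$ would be a positive multiple of $m$ and hence an element of $S$, which is impossible, and the rest of your proof goes through unchanged.
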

Observe that if $S$ is a numerical semigroup such that $S\neq \N$ and $\{x,y\}\subseteq \PF(S),$ then $x+y \in S$ or $x+y \in \PF(S).$ Therefore, we can state the following result.

\begin{proposition}\label{proposition2}
If $S$ is a numerical semigroup and $S \neq \N,$ then $S \cup \PF(S)$ is also a numerical semigroup.
\end{proposition}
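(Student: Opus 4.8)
The plan is to verify that $T = S \cup \PF(S)$ satisfies the three defining properties of a numerical semigroup: it contains $0$, it is closed under addition, and its complement in $\N$ is finite. The first and third are immediate: $0 \in S \subseteq T$, and since $\N \setminus T \subseteq \N \setminus S$ is a subset of a finite set, it is finite. So the entire content is closure under addition.

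To prove closure, I would take arbitrary $a, b \in T$ and show $a + b \in T$, splitting into cases according to whether each of $a, b$ lies in $S$ or in $\PF(S) \setminus S$ (note $\PF(S) \cap S = \emptyset$ by definition of pseudo-Frobenius number). If $a, b \in S$, then $a + b \in S \subseteq T$ since $S$ is a semigroup. If $a \in S$ and $b \in \PF(S)$ (or vice versa): should $a = 0$, then $a + b = b \in T$; should $a \neq 0$, then $a \in S \setminus \{0\}$, and by the defining property of a pseudo-Frobenius number, $b + a \in S \subseteq T$. The remaining case is $a, b \in \PF(S)$, and here I would invoke exactly the observation made in the sentence immediately preceding the statement: if $S \neq \N$ and $\{x, y\} \subseteq \PF(S)$, then $x + y \in S$ or $x + y \in \PF(S)$; applying this with $x = a$, $y = b$ gives $a + b \in S \cup \PF(S) = T$. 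This exhausts all cases.

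That preliminary observation is really the crux, so I would either cite it as already established in the text or fold a short justification into the proof: if $a, b \in \PF(S)$ but $a + b \notin S$, then for every $s \in S \setminus \{0\}$ we have $(a+b) + s = a + (b + s)$, and since $b \in \PF(S)$ forces $b + s \in S$; moreover $b + s \neq 0$, so applying the pseudo-Frobenius property of $a$ yields $a + (b+s) \in S$. Hence $a + b \notin S$ together with $(a+b) + s \in S$ for all $s \in S \setminus \{0\}$ shows $a + b \in \PF(S)$.

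The only genuine subtlety — and the one place to be careful — is the degenerate case $S = \N$: then $\PF(S) = \emptyset$ and $T = \N$, which is still a numerical semigroup, so the statement holds trivially; otherwise $S \neq \N$ and the argument above applies verbatim. I do not expect any real obstacle here; the proof is a short, finite case analysis resting entirely on the definition of $\PF(S)$ and the cited closure remark.
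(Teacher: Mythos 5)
Your proof is correct and follows essentially the same route the paper takes: the paper states the key observation (for $S\neq\N$, if $x,y\in\PF(S)$ then $x+y\in S$ or $x+y\in\PF(S)$) in the sentence immediately before the proposition and leaves the remaining case analysis implicit, while you simply spell out that case analysis and also supply a short justification of the observation itself.
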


The following result indicates how the ideal extensions of a numerical semigroup are.

\begin{theorem}\label{theorem3}
Let $S$ and $\Delta$ be numerical semigroups. Then $\Delta$ is an ideal extension of $S$ if and only if $S\subseteq \Delta \subseteq S \cup \PF(S).$	
\end{theorem}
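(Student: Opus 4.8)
The plan is to prove both implications by exploiting the characterization of pseudo-Frobenius numbers. First I would unwind what it means for $S\setminus\{0\}$ to be an ideal of $\Delta$: it requires both that $S\subseteq\Delta$ (so that $S\setminus\{0\}$ is actually a subset of $\Delta$) and that $(S\setminus\{0\})+\Delta\subseteq S\setminus\{0\}$. Since $0\in\Delta$ and $S$ is closed under addition, the only genuinely new content in the ideal condition is that $x+\delta\in S$ for every $x\in S\setminus\{0\}$ and every $\delta\in\Delta\setminus S$ (for $\delta\in S$ this is automatic). So the whole statement reduces to: $\Delta$ is an ideal extension of $S$ iff $S\subseteq\Delta$ and every element of $\Delta\setminus S$, when added to a nonzero element of $S$, lands in $S$.

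For the forward direction, assume $\Delta$ is an ideal extension of $S$. Then $S\subseteq\Delta$ is immediate. Take $\delta\in\Delta\setminus S$; I must show $\delta\in\PF(S)$, i.e. $\delta\notin S$ (true by choice) and $\delta+s\in S$ for all $s\in S\setminus\{0\}$. But $s\in S\setminus\{0\}$ is an element of the ideal, $\delta\in\Delta$, so $s+\delta\in S\setminus\{0\}\subseteq S$ by the ideal property. Hence $\delta\in\PF(S)$, giving $\Delta\subseteq S\cup\PF(S)$.

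For the converse, assume $S\subseteq\Delta\subseteq S\cup\PF(S)$. I need to check that $S\setminus\{0\}$ is an ideal of $\Delta$, i.e. $(S\setminus\{0\})+\Delta\subseteq S\setminus\{0\}$. Take $s\in S\setminus\{0\}$ and $\delta\in\Delta$. If $\delta\in S$, then $s+\delta\in S\setminus\{0\}$ since $S$ is a submonoid and $s\neq 0$ forces $s+\delta\neq 0$. If $\delta\notin S$, then since $\delta\in\Delta\subseteq S\cup\PF(S)$ we have $\delta\in\PF(S)$, so by definition $\delta+s\in S$; and again it is nonzero because $s\neq 0$ and $\delta\ge 0$ (every pseudo-Frobenius number of a numerical semigroup $S\neq\N$ is a nonnegative integer; if $S=\N$ then $\PF(S)=\emptyset$ and there is nothing to check). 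Thus $(S\setminus\{0\})+\Delta\subseteq S\setminus\{0\}$, so $\Delta$ is an ideal extension of $S$.

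The only mild subtlety — and the step I would be most careful about — is confirming that the relevant sums stay nonzero and that one is consistent about the degenerate case $S=\N$, where $\PF(S)=\emptyset$ and the chain forces $\Delta=\N$ as well. One should also note in passing that $S\cup\PF(S)$ is genuinely a numerical semigroup by Proposition~\ref{proposition2}, so that the upper bound $S\cup\PF(S)$ in the statement is itself a legitimate ideal extension; that is not strictly needed for the equivalence but makes the statement's formulation meaningful. Beyond that, the argument is a direct translation between the ideal condition and the defining property of $\PF(S)$, with no combinatorial obstacle.
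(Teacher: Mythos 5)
Your proof is correct and follows essentially the same route as the paper's: both directions translate directly between the ideal condition $(S\setminus\{0\})+\Delta\subseteq S\setminus\{0\}$ and the defining property of $\PF(S)$. You spell out the sufficiency case-split ($\delta\in S$ versus $\delta\in\PF(S)$, and the nonzero/nonnegativity checks) more explicitly than the paper, which simply asserts the inclusion, but there is no substantive difference.
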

\begin{proof}
	{\it Necessity.} If $\Delta$ is an ideal extension of $S,$ then $S\subseteq \Delta$ and $(S\backslash \{0\})+\Delta \subseteq S\backslash \{0\}.$ Therefore, if $x\in \Delta \backslash S,$ then $\{x\}+(S\backslash \{0\})\subseteq S.$ Hence, $x\in \PF(S).$ Consequently, $\Delta \subseteq S\cup \PF(S).$
		
    	{\it Sufficiency.} If $S \subseteq \Delta \subseteq S\cup \PF(S),$ then $(S\backslash \{0\})+\Delta \subseteq S\backslash \{0\}.$ Thus, $S\backslash \{0\}$ is an ideal of $\Delta$ and so $\Delta$ is an ideal extension of $S.$
 \end{proof}

As an immediate consequence of the previous theorem, we have the following result.

\begin{corollary}\label{corollary4}
	Let $S$ be a numerical semigroup. Then the cardinality of the set $\{\Delta \mid \Delta \mbox{ is an ideal extension of }S\}$ is less than or equal to $2^{\t(S)}.$
	
\end{corollary}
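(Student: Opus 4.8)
The plan is to read this bound off directly from Theorem~\ref{theorem3}, with no further work of substance. By that theorem, the ideal extensions of $S$ are exactly the numerical semigroups $\Delta$ with $S\subseteq \Delta \subseteq S\cup \PF(S)$. So I would associate to each such $\Delta$ the set $\Delta\setminus S$, and observe that the inclusions above force $\Delta\setminus S \subseteq (S\cup\PF(S))\setminus S \subseteq \PF(S)$. This gives a map from the set $\{\Delta \mid \Delta \text{ is an ideal extension of }S\}$ to the power set of $\PF(S)$.

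Next I would check that this map $\Delta \mapsto \Delta\setminus S$ is injective: if $\Delta_1\setminus S = \Delta_2\setminus S$, then, since each $\Delta_i$ contains $S$, we recover $\Delta_i = S\cup(\Delta_i\setminus S)$, and hence $\Delta_1=\Delta_2$. Thus the set of ideal extensions of $S$ embeds into $\mathcal{P}(\PF(S))$. Finally, since $\PF(S)$ has exactly $\t(S)$ elements by the definition of the type of $S$, its power set has cardinality $2^{\t(S)}$, and the injection yields the claimed inequality.

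There is essentially no obstacle in this argument; the only points worth a remark are that it goes through unchanged in the degenerate case $S=\N$ (where $\PF(S)=\emptyset$, $\t(S)=0$, and $\N$ is its own unique ideal extension, matching $2^0=1$), and that the bound need not be sharp: a subset $X\subseteq \PF(S)$ produces a numerical semigroup $S\cup X$ only when $S\cup X$ is closed under addition, which can fail precisely when some sum $x+y$ with $x,y\in X$ lies in $\PF(S)\setminus X$ (recall the observation preceding Proposition~\ref{proposition2}). Hence $2^{\t(S)}$ should be read as an upper bound rather than an exact count.
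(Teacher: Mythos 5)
Your proof is correct and is exactly the argument the paper has in mind when it calls the corollary ``an immediate consequence'' of Theorem~\ref{theorem3}: ideal extensions sit between $S$ and $S\cup\PF(S)$, so the map $\Delta\mapsto\Delta\setminus S$ injects them into $\mathcal{P}(\PF(S))$, which has $2^{\t(S)}$ elements. The extra remarks on the case $S=\N$ and on non-sharpness are accurate but not needed.
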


Our purpose now is to present an algorithm to compute all the ideal extensions of a numerical semigroup $S$. In order to do that, we introduce the following concepts and results.

Let $S$ be a numerical semigroup and $n\in S\backslash \{0\}.$ We define, in honour of \cite{apery}, the {\it Apéry set of n in S}, as $\Ap(S,n)=\{s\in S\mid s-n\notin S\}.$

 The following result is in \cite[Lemma 2.4]{libro}.

 \begin{proposition}\label{proposition5}
 	Let $S$ be a numerical semigroup and  $n\in S \backslash \{0\}.$ Then $\Ap(S,n)$ has cardinality $n.$ Moreover,
 	$\Ap(S,n)=\{0=w(0),w(1), \dots, w(n-1)\}$, where $w(i)$ is the least
 	element of $S$ congruent with $i$ modulo $n$, for all $i\in
 	\{0,\dots, n-1\}.$ 	
 \end{proposition}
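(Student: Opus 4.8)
The plan is to prove the two halves of the statement separately, building the second (the explicit description of $\Ap(S,n)$ via least residues) first and then deducing the cardinality count from it. Let me fix $n \in S \setminus \{0\}$ and, for each $i \in \{0,1,\dots,n-1\}$, set $w(i)$ to be the least element of $S$ that is congruent to $i$ modulo $n$. This is well defined: the set $\{i, i+n, i+2n, \dots\}$ eventually lies entirely in $S$ because $\N \setminus S$ is finite, so it contains at least one (hence a least) element of $S$.

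First I would show that $\Ap(S,n) = \{w(0), w(1), \dots, w(n-1)\}$. For the inclusion $\supseteq$: fix $i$ and suppose $w(i) - n \in S$; since $w(i) - n$ is again congruent to $i$ modulo $n$ and is strictly smaller than $w(i)$, this contradicts the minimality of $w(i)$, so $w(i) - n \notin S$, i.e. $w(i) \in \Ap(S,n)$. For the inclusion $\subseteq$: take $s \in \Ap(S,n)$ and let $i = s \bmod n$, so $s \equiv i \pmod n$ and by definition of $w(i)$ we have $w(i) \le s$ with $w(i) \equiv i \pmod n$. Write $s = w(i) + kn$ for some $k \in \N$. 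If $k \ge 1$, then $s - n = w(i) + (k-1)n$ is a sum of elements of $S$ (namely $w(i) \in S$ and $(k-1)$ copies of $n \in S$), so $s - n \in S$, contradicting $s \in \Ap(S,n)$. Hence $k = 0$ and $s = w(i)$. This proves both inclusions, so $\Ap(S,n) = \{w(0), \dots, w(n-1)\}$.

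Next I would establish that the $w(i)$ are pairwise distinct, which immediately gives $\sharp \Ap(S,n) = n$. This is straightforward: if $w(i) = w(j)$ then $i \equiv w(i) = w(j) \equiv j \pmod n$, and since $i, j \in \{0,\dots,n-1\}$ this forces $i = j$. Finally, $w(0) = 0$ because $0 \in S$, $0 \equiv 0 \pmod n$, and $0$ is the least element of $\N$, hence certainly the least element of $S$ congruent to $0$ modulo $n$.

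I do not anticipate a genuine obstacle here; the only point requiring a little care is the well-definedness of $w(i)$, which rests on the finiteness of $\N \setminus S$ (part of the definition of a numerical semigroup) — without that, the relevant residue class need not meet $S$ at all. Everything else is a routine application of minimality together with the fact that $n \in S$, so that subtracting or adding $n$ moves within $S$ in the expected direction.
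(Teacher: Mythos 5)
Your proof is correct. The paper itself does not reprove this statement --- it cites it directly as \cite[Lemma 2.4]{libro} --- so there is no in-text argument to compare against, but what you have written is the standard proof of this fact and matches the one given in that reference: well-definedness of $w(i)$ via finiteness of $\N\setminus S$, the two inclusions obtained from minimality of $w(i)$ and from $n\in S$, and the cardinality count from the residues being distinct. No gaps.
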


Let $S$ be a numerical semigroup.
We define over $\Z$  the following binary relation: $a\leq_S b$ if
$b-a \in S$. In \cite{libro} it is proved that $\leq_S $ is an order relation
(reflexive, antisymmetric and transitive).

The following result is found in \cite[Proposition 2.20]{libro}.
\begin{proposition}\label{proposition6} If $S$ is  a numerical semigroup and $n \in S\setminus
	\{0\},$ then
	$$
	\mathrm{PF}(S)=\{w-n\mid w \in {\rm Maximals}_{\leq_S} \Ap(S,n)\}.
	$$
\end{proposition}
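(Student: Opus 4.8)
The plan is to establish the two inclusions separately, in each case exploiting the translation $x\mapsto x+n$ together with the two defining characterizations: $w\in\Ap(S,n)$ if and only if $w\in S$ and $w-n\notin S$; and $x\in\PF(S)$ if and only if $x\notin S$ and $x+s\in S$ for every $s\in S\setminus\{0\}$. Note first that by Proposition~\ref{proposition5} the set $\Ap(S,n)$ is finite, so its set of $\leq_S$-maximal elements is well defined.

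For the inclusion $\PF(S)\subseteq\{\,w-n\mid w\in{\rm Maximals}_{\leq_S}\Ap(S,n)\,\}$, I would take $x\in\PF(S)$ and set $w=x+n$. Since $n\in S\setminus\{0\}$ and $x\in\PF(S)$, we get $w=x+n\in S$; and since $x\notin S$ we have $w-n=x\notin S$, so $w\in\Ap(S,n)$. To see that $w$ is $\leq_S$-maximal in $\Ap(S,n)$, suppose $w'\in\Ap(S,n)$ satisfies $w\leq_S w'$ and write $w'=w+s$ with $s=w'-w\in S$; if $s\neq 0$, then $w'-n=x+s\in S$ because $x\in\PF(S)$ and $s\in S\setminus\{0\}$, which contradicts $w'\in\Ap(S,n)$. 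Hence $s=0$, that is $w'=w$, so $w$ is maximal and $x=w-n$ lies in the right-hand side.

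For the reverse inclusion, I would take $w\in{\rm Maximals}_{\leq_S}\Ap(S,n)$ and verify that $w-n\in\PF(S)$. First, $w-n\notin S$ because $w\in\Ap(S,n)$. Next, fix $s\in S\setminus\{0\}$; then $w+s\in S$, and I claim that $(w-n)+s=(w+s)-n\in S$. If not, then $w+s\in\Ap(S,n)$, and since $(w+s)-w=s\in S\setminus\{0\}$ we have $w\leq_S w+s$ with $w\neq w+s$, contradicting the maximality of $w$. Therefore $(w-n)+s\in S$ for every $s\in S\setminus\{0\}$, and consequently $w-n\in\PF(S)$.

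Combining the two inclusions yields the claimed equality, and since $w\mapsto w-n$ is injective no two distinct maximal elements produce the same pseudo-Frobenius number. I do not anticipate a genuine obstacle here: the proof is bookkeeping with the definitions. The points that require a little care are applying the characterization of $\Ap(S,n)$ consistently when passing between the Apéry set and $\PF(S)$, and tracking the direction of the order $\leq_S$ so that maximality is invoked on the correct comparison.
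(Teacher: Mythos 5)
Your proof is correct. Note that the paper does not actually prove this statement---it cites it directly as \cite[Proposition 2.20]{libro} without supplying an argument---so there is no ``paper proof'' to compare against. Your two-inclusion argument via the shift $x\mapsto x+n$ is exactly the standard proof found in that reference: both directions hinge on the observation that $w\in\Ap(S,n)$ fails to be $\leq_S$-maximal precisely when some $w+s$ with $s\in S\setminus\{0\}$ still lies in $\Ap(S,n)$, which after subtracting $n$ is the negation of the pseudo-Frobenius condition. All the steps check out (in particular you correctly track the direction of $\leq_S$ when invoking maximality), and the closing remark about injectivity of $w\mapsto w-n$, while not needed for the set equality, is a harmless bonus.
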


We illustrate the content of the previous proposition with an example.

\begin{example}\label{example7} If $S=\{0,5,6,8,\rightarrow\}$ (the symbol $\rightarrow$ means that every integer greater than $8$ belongs to the set), then $\Ap(S,5)=\{0,6,8,9,12\}$ and ${\rm Maximals}_{\leq_S} \{0,6,8,\\
9,12\}=\{8,9,12\}.$ By applying Proposition \ref{proposition6}, we have $\PF(S)=\{3,4,7\}.$	
\end{example}

By Theorem \ref{theorem3}, we know that a numerical semigroup $\Delta$ is an ideal extension of a numerical semigroup $S$ if and only if there is $A\subseteq \PF(S)$ such that $\Delta=S \cup A.$
The following result is easy to prove and shows the property that a subset $A$ of $\PF(S)$ must verify  so that the set $S \cup A$ is a numerical semigroup.

\begin{proposition}\label{proposition8}Let $S$ be a numerical semigroup such that $S\neq \N$ and $A \subseteq \PF(S).$ Then the following conditions are equivalents:
	\begin{enumerate}
		\item[1)] $S\cup A$ is a numerical semigroup.
		\item[2)] If $\{a,b\}\subseteq A$ and $a+b\in \PF(S),$ then $a+b\in A.$
	\end{enumerate}
	
\end{proposition}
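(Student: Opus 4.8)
The statement to prove is Proposition~\ref{proposition8}, characterizing when $S\cup A$ is a numerical semigroup for $A\subseteq\PF(S)$. Since $A$ is finite and $\N\setminus S$ is finite, the set $\N\setminus(S\cup A)$ is automatically finite, and $0\in S\cup A$; so the only property at issue is closure under addition. Thus the plan is to show that condition (2) is exactly the statement that $S\cup A$ is closed under $+$.

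First I would observe that closure of $S\cup A$ reduces to checking sums of elements drawn from $S$ and $A$ in all combinations. A sum of two elements of $S$ lies in $S$ since $S$ is a semigroup, so that case is free. For the mixed case, take $a\in A\subseteq\PF(S)$ and $s\in S$: if $s=0$ then $a+s=a\in A$, and if $s\neq 0$ then by the very definition of a pseudo-Frobenius number $a+s\in S$. Hence every sum involving at least one element of $S\setminus\{0\}$ already lands in $S\subseteq S\cup A$, with no hypothesis on $A$ needed. This isolates the only genuinely constrained case: sums $a+b$ with $\{a,b\}\subseteq A$.

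For that last case I would invoke the remark preceding Proposition~\ref{proposition2} (which the paper states and which I may assume): if $S\neq\N$ and $\{x,y\}\subseteq\PF(S)$, then $x+y\in S$ or $x+y\in\PF(S)$. Applying this with $x=a$, $y=b$ in $A\subseteq\PF(S)$, we get $a+b\in S$ or $a+b\in\PF(S)$. If $a+b\in S$ then $a+b\in S\cup A$ with nothing to check. If instead $a+b\in\PF(S)$, then $a+b\in S\cup A$ holds precisely when $a+b\in A$. Putting the cases together: $S\cup A$ is closed under addition if and only if, for all $\{a,b\}\subseteq A$ with $a+b\in\PF(S)$, we have $a+b\in A$ — which is exactly condition (2). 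This gives both implications $1)\Rightarrow 2)$ and $2)\Rightarrow 1)$ simultaneously.

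There is no real obstacle here; the proof is a short case analysis. The one point to be careful about is the bookkeeping of cases for closure (the $S+S$, $S+A$, and $A+A$ possibilities, including the roles of $0$), and making sure the dichotomy "$a+b\in S$ or $a+b\in\PF(S)$" is correctly cited rather than reproved. I would also note explicitly at the start that the finiteness of $\N\setminus(S\cup A)$ and the membership $0\in S\cup A$ are immediate, so that "numerical semigroup" really does boil down to additive closure in this setting.
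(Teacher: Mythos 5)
Your proof is correct and follows essentially the same route as the paper: reduce to additive closure, dispose of the $S+S$ and $S+A$ cases using the pseudo-Frobenius property, and handle the remaining $A+A$ case via the dichotomy that $a+b\in S$ or $a+b\in\PF(S)$. Presenting the argument as a single biconditional rather than two separate implications is a minor stylistic difference, not a different proof.
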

\begin{proof}
	{\it 1) implies 2).} If $\{a,b\}\subseteq A,$ then $a+b\in S\cup A.$ As $a+b\in \PF(S),$ then $a+b\notin S$ and so $a+b\in A.$
	
	{\it 2) implies  1).} The sum of two elements of $S$ belongs again to $S.$ As $A\subseteq \PF(S),$ then the sum of an element of $A$ and a nonzero element of $S$, belongs also to $S.$ Therefore, to prove that $S\cup A$ is a numerical semigroup, it will be enough to see that the sum of two elements belonging to $A$ is in $S\cup A.$ Indeed, if $\{a,b\}\subseteq A$ and $a+b\notin S,$ then by the previous comment to Proposition \ref{proposition2}, we know that $a+b\in \PF(S).$ Thus, $a+b\in A.$
\end{proof}

The above proposition leads to the following definition.

Let $S$ be a numerical semigroup. We say that a set $A$ is an $\ii(S)${\it -pertinent set} if it verifies the following conditions:
\begin{enumerate}
\item[1)]  $A\subseteq \PF(S)$ and
\item[2)] if $\{a,b\}\subseteq A$ and $a+b \in \PF(S),$ then $a+b\in A.$
\end{enumerate}

As an immediate consequence of Theorem \ref{theorem3} and Proposition \ref{proposition8}, we have the following result.
\begin{corollary}\label{corollary9} Let $S$ be a numerical semigroup such that $S\neq \N.$ Then the set formed by the ideal extensions of $S$ is $\{S\cup A\mid A \mbox{ is an }\ii(S)\mbox{-pertinent set}\}.$
\end{corollary}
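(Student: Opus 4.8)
The plan is to obtain this corollary as a direct combination of two already-established facts: Theorem~\ref{theorem3}, which characterizes ideal extensions of $S$ as exactly the numerical semigroups $\Delta$ with $S\subseteq\Delta\subseteq S\cup\PF(S)$, and Proposition~\ref{proposition8}, which says that for $A\subseteq\PF(S)$ the set $S\cup A$ is a numerical semigroup if and only if $A$ is $\ii(S)$-pertinent. So the argument is a short double inclusion between the set $\mathcal{E}=\{\Delta\mid\Delta\text{ is an ideal extension of }S\}$ and the set $\mathcal{F}=\{S\cup A\mid A\text{ is an }\ii(S)\text{-pertinent set}\}$.

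First I would prove $\mathcal{F}\subseteq\mathcal{E}$. Let $A$ be an $\ii(S)$-pertinent set. By definition $A\subseteq\PF(S)$, so $S\subseteq S\cup A\subseteq S\cup\PF(S)$. By Proposition~\ref{proposition8} (the implication 2)$\Rightarrow$1)), $S\cup A$ is a numerical semigroup. Hence $S\cup A$ satisfies the hypotheses of Theorem~\ref{theorem3} (as $S\neq\N$, it is a legitimate numerical semigroup sitting between $S$ and $S\cup\PF(S)$), so $S\cup A$ is an ideal extension of $S$; that is, $S\cup A\in\mathcal{E}$.

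Next I would prove $\mathcal{E}\subseteq\mathcal{F}$. Let $\Delta$ be an ideal extension of $S$. By Theorem~\ref{theorem3}, $S\subseteq\Delta\subseteq S\cup\PF(S)$. Set $A=\Delta\setminus S$. Then $A\subseteq\PF(S)$ and $\Delta=S\cup A$. Since $\Delta$ is in particular a numerical semigroup, Proposition~\ref{proposition8} (the implication 1)$\Rightarrow$2)) shows that $A$ is $\ii(S)$-pertinent. Therefore $\Delta=S\cup A\in\mathcal{F}$, completing the argument.

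There is essentially no obstacle here; the only thing to be careful about is the bookkeeping of which elements of $\Delta$ lie in $S$ versus in $\PF(S)$ (these are disjoint since pseudo-Frobenius numbers are by definition not in $S$), so that writing $\Delta=S\cup(\Delta\setminus S)$ really does exhibit $\Delta$ in the required form with $\Delta\setminus S\subseteq\PF(S)$; and to note that the hypothesis $S\neq\N$ is exactly what is needed to invoke Proposition~\ref{proposition8}. One could even remark that $\mathcal{E}$ and $\mathcal{F}$ are in bijection with the family of $\ii(S)$-pertinent sets, via $A\mapsto S\cup A$, which refines Corollary~\ref{corollary4}.
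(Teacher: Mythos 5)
Your proof is correct and follows exactly the route the paper intends: the paper itself states Corollary~\ref{corollary9} as an ``immediate consequence of Theorem~\ref{theorem3} and Proposition~\ref{proposition8},'' and your double-inclusion argument simply spells out that combination. Nothing to add.
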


We are already able to provide the  previously announced algorithm. \\

\begin{algorithm}[H]
	\BlankLine
	\KwData{A numerical semigroup $S\neq \N.$}
    \KwResult{$\{\Delta\mid \Delta \mbox{ is an ideal extension of }S\}.$}
    \BlankLine
    Compute $\PF(S)$\;
 	Compute $B=\{A \mid A \subseteq \PF(S) \mbox{ and } A \mbox{ is an }\ii(S)\mbox{-pertinent set}\}$\;
 	\Return $\{S\cup A\mid A\in B\}$\;
 	\caption{Computation of the set of ideal extensions of a numerical semigroup.}\label{algoritmo1}
\end{algorithm}

We finish this section illustrating  how the
previous algorithm works.
\begin{example} If $S=\{0,5,6,8,\rightarrow\},$ then by Example \ref{example7} we know that $\PF(S)=\{3,4,7\}.$ We calculate $\scrP(\{3,4,7\})=\{\emptyset, \{3\},\{4\},\{7\},\{3,4\},\{3,7\},\{4,7\},\\
\{3,4,7\}\}.$ A  simple check shows that $\{3,4\}$ is the only element of $\scrP(\{3,4,7\})$ which is not $\ii(S)$-pertinent. Algorithm 10 asserts that all the ideal extensions of $S$ are: $S\cup \emptyset$, $S\cup \{3\},$ $S\cup \{4\},$ $S\cup \{7\},$ $S\cup \{3,7\},$ $S\cup \{4,7\}$ and $S\cup \{3,4,7\}.$

The code below have is found in \href{https://github.com/D-marina/CommutativeMonoids/blob/master/ComplexityOfNS/complexityOfNS.ipynb}{complexityOfNS.ipynb}, is part of  \cite{commutative} and uses the library \cite{numericalsgps}.
\begin{verbatim}
gap> S:=NumericalSemigroup(5,6,8,9,10,11,12);;
gap> lDeltas:=idealExtensionsOfNS(S);;
gap> List(lDeltas,x->MinimalGeneratingSystemOfNumericalSemigroup(x));
\end{verbatim}
The output obtained is
\begin{verbatim}
[ [ 3, 5 ], [ 4, 5, 6 ], [ 5, 6, 7, 8, 9 ], [ 3, 4, 5 ],
[ 3, 5, 7 ], [ 4, 5, 6, 7 ] ]
\end{verbatim}
which is the list of system of generators of the proper ideal extensions of $S$.
\end{example}

\section{The complexity and the $\ii$-chains}
Our first objetive in this section is to prove that if $S$ is a numerical semigroup, there is $k\in \N$ such that $S\in \CJ^k(\N).$ Recall that $\CJ^0(\N)=\N,$ $\CJ^1(\N)=\CJ(\N)$ and $\CJ^{k+1}(\N)=\CJ(\CJ^{k}(\N))$ for all $k\in \N.$

\begin{proposition}\label{proposition12} Let $S$ be a numerical semigroup and $k\in \N.$ Then $S\in \CJ^k(\N)$ if and only if there exists an $\ii$-chain of lenght $k$ connecting $S$ with $\N.$	
\end{proposition}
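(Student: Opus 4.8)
The plan is to prove both implications by induction on $k$, using the definition of an $\ii$-chain together with the fact (from Theorem \ref{theorem3}) that $S \in \CJ(\Delta)$ means precisely $S \subseteq \Delta \subseteq S \cup \PF(S)$. The base case $k = 0$ is immediate: $S \in \CJ^0(\N) = \N$ forces $S = \N$, and an $\ii$-chain of length $0$ connecting $S$ with $\N$ is just the one-term chain $S_0 = S = \N$, so the two conditions are trivially equivalent. For the base case $k = 1$, $S \in \CJ^1(\N) = \CJ(\N)$ says exactly that $S \in \CJ(\N)$, which is the $\ii$-chain $S_0 = S \subseteq S_1 = \N$ with $S_0 \in \CJ(S_1)$.

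For the inductive step, assume the equivalence holds for $k$ and prove it for $k+1$. Suppose first that $S \in \CJ^{k+1}(\N) = \CJ(\CJ^k(\N)) = \bigcup_{\Delta \in \CJ^k(\N)} \CJ(\Delta)$. Then there exists $\Delta \in \CJ^k(\N)$ with $S \in \CJ(\Delta)$, i.e.\ $S_0 = S \subseteq S_1 = \Delta$ with $S_0 \in \CJ(S_1)$. By the induction hypothesis applied to $\Delta$, there is an $\ii$-chain $\Delta = T_0 \subseteq T_1 \subseteq \dots \subseteq T_k = \N$ with $T_i \in \CJ(T_{i+1})$. Concatenating, $S \subseteq \Delta = T_0 \subseteq T_1 \subseteq \dots \subseteq T_k = \N$ is an $\ii$-chain of length $k+1$ connecting $S$ with $\N$.

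Conversely, suppose there is an $\ii$-chain $S = S_0 \subseteq S_1 \subseteq \dots \subseteq S_{k+1} = \N$ with $S_i \in \CJ(S_{i+1})$ for all $i$. Then $S_1 \subseteq S_2 \subseteq \dots \subseteq S_{k+1} = \N$ is an $\ii$-chain of length $k$ connecting $S_1$ with $\N$, so by the induction hypothesis $S_1 \in \CJ^k(\N)$. Since $S_0 = S \in \CJ(S_1)$ and $S_1 \in \CJ^k(\N)$, we get $S \in \bigcup_{\Delta \in \CJ^k(\N)} \CJ(\Delta) = \CJ(\CJ^k(\N)) = \CJ^{k+1}(\N)$, completing the induction.

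There is no substantial obstacle here: the argument is a routine bidirectional induction whose only real content is unwinding the recursive definitions $\CJ^{k+1}(\N) = \CJ(\CJ^k(\N))$ and $\CJ(\scrF) = \bigcup_{\Delta \in \scrF}\CJ(\Delta)$, and matching them against the definition of an $\ii$-chain as a composable sequence of $\CJ$-relations. The only point requiring mild care is the handling of the base cases and making sure the length bookkeeping (a chain of length $k$ has $k+1$ terms $S_0, \dots, S_k$) is consistent throughout; everything else is bookkeeping on the chain indices.
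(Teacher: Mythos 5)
Your proof is correct and follows essentially the same bidirectional induction as the paper's own argument, unwinding $\CJ^{k+1}(\N)=\CJ(\CJ^k(\N))=\bigcup_{\Delta\in\CJ^k(\N)}\CJ(\Delta)$ and splicing/trimming the chain at its second term. The extra explicit treatment of the case $k=1$ and the mention of Theorem~\ref{theorem3} are harmless but unnecessary.
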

\begin{proof}
	We proceed by induction on $k.$ For $k=0$ the result is trivial. Assume the result is true for $k-1$ and obtain the result for $k$.
	
	{\it Necessity.} If  $S\in \CJ^k(\N), $ then $S\in \CJ(\CJ^{k-1}(\N))$ and so there exists $S_1\in \CJ^{k-1}(\N)$ such that $S\subseteq S_1$ is an $\ii$-chain. By the induction hypothesis, there is an $\ii$-chain $S_1\subseteq S_2\subseteq \dots \subseteq S_k=\N$ of lenght $k-1$ connecting $S_1$ with $\N.$ It is clear that $S\subseteq S_1\subseteq S_2\subseteq \dots \subseteq S_k=\N$ is an $\ii$-chain of length $k$ connecting $S$ and $\N.$
	
	{\it Sufficiency.} If $S=S_0\subseteq S_1\subseteq \dots \subseteq S_k=\N $ is an $\ii$-chain of lenght $k$, then $S_1\subseteq \dots \subseteq S_k=\N $ is an $\ii$-chain of length $k-1.$ By the induction hypothesis $S_1\in \CJ^{k-1}(\N).$ Since $S\in \CJ(S_1),$ then $S\in \CJ(\CJ^{k-1}(\N))=\CJ^k(\N).$
\end{proof}

Let $\CL=\{S\mid S \mbox{ is a numerical semigroup}\}.$ We will say the map  $\theta:\CL\backslash \{\N\}\to \scrP(\N)$ is an {\it $\ii$-pertinent map} if $\theta(S)$ is a nonempty $\ii(S)$-pertinent set for all $S\in \CL \backslash \{\N\}.$

The following result shows us the large number of $\ii$-pertinent functions that exist.

\begin{proposition}\label{proposition13}
	The map $\theta:\CL\backslash \{\N\}\to \scrP(\N)$ defined by
	\begin{enumerate}
		\item $\theta(S)=\PF(S)$ is an $\ii$-pertinent map.
		\item $\theta(S)=\{\F(S)\}$ is an $\ii$-pertinent map.
		\item $\theta(S)=\{x\in\PF(S)\mid x\ge \frac{\F(S)}{2}\}$ is an $\ii$-pertinent map.
		\item $\theta(S)=\{x\in\PF(S)\mid x> \F(S)-\m(S)\}$ is an $\ii$-pertinent map.
		\item $\theta(S)=\{x\in\PF(S)\mid x> \F(S)-\g(S)\}$ is an $\ii$-pertinent map.
		\item $\theta(S)=\{\min \{x\in\PF(S)\mid x> \frac{\F(S)}{2}\}\}$ is an $\ii$-pertinent map.
		
	\end{enumerate}
	
\end{proposition}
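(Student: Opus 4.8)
The plan is to verify, for each of the six maps, the two defining conditions of an $\ii(S)$-pertinent set (inclusion in $\PF(S)$ and closure under the "$a+b\in\PF(S)\Rightarrow a+b\in\theta(S)$" rule), together with non-emptiness, for every numerical semigroup $S\neq\N$. Condition (1) of $\ii$-pertinence is automatic for items 1--6 because each $\theta(S)$ is defined as a subset of $\PF(S)$ (note $\F(S)\in\PF(S)$ always, since $\F(S)+s>\F(S)$ hence lies in $S$ for all $s\in S\setminus\{0\}$); and non-emptiness is clear for (1) and (2), while for (3)--(6) one observes $\F(S)\ge\frac{\F(S)}{2}$ and $\F(S)>\F(S)-\m(S)$, $\F(S)>\F(S)-\g(S)$ (using $\m(S),\g(S)\ge1$ when $S\neq\N$), so $\F(S)$ witnesses non-emptiness in items 3,4,5, and for item 6 the set $\{x\in\PF(S)\mid x>\frac{\F(S)}{2}\}$ is non-empty (it contains $\F(S)$ when $\F(S)\ge1$, and when $S\neq\N$ indeed $\F(S)\ge1$) so its minimum exists. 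Thus the real content is the closure condition (2).

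For item 1 this is exactly the comment preceding Proposition~\ref{proposition2} (if $a,b\in\PF(S)$ then $a+b\in S$ or $a+b\in\PF(S)$), so there is nothing to check. For items 2--6 I would use the key inequality: whenever $a,b\in\PF(S)$ with $a,b\ge c$ for some threshold $c$, and $a+b\in\PF(S)$ (in particular $a+b\notin S$), we get $a+b\ge a+c$ and, more usefully, $a+b$ is at least as large as each of $a,b$, hence $a+b$ again clears any threshold that $a$ and $b$ cleared. Concretely, for item 2: if $a,b\in\{\F(S)\}$ then $a=b=\F(S)$ and $2\F(S)\notin\PF(S)$ (since $2\F(S)\ge\F(S)+\m(S)>\F(S)=\max(\Z\setminus S)$ forces $2\F(S)\in S$, assuming $\F(S)\ge1$; if $\F(S)\le -1$, i.e.\ $S=\N$, that case is excluded), so condition (2) is vacuous. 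For items 3--5, thresholds of the form $\F(S)/2$, $\F(S)-\m(S)$, $\F(S)-\g(S)$: if $a,b\in\PF(S)$ both exceed the threshold and $a+b\in\PF(S)$, then since $b>0$ (indeed $b\ge\F(S)-\g(S)$ could a priori be nonpositive, so here I would first argue all relevant pseudo-Frobenius numbers in these truncated sets are positive, using $\F(S)\ge 1$ and the size of the threshold) we get $a+b>a\ge$ threshold, so $a+b$ lies in $\theta(S)$.

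Item 6 is the most delicate and I expect it to be the main obstacle: $\theta(S)$ is a singleton $\{\mu\}$ with $\mu=\min\{x\in\PF(S)\mid x>\F(S)/2\}$, so condition (2) requires checking that $\mu+\mu=2\mu\notin\PF(S)$. The idea is that $2\mu>\F(S)$ because $\mu>\F(S)/2$; and since $2\mu>\F(S)=\max(\Z\setminus S)$ we conclude $2\mu\in S$, hence $2\mu\notin\PF(S)$, so condition (2) is again vacuously satisfied. The one subtlety to handle carefully is ensuring $\mu>\F(S)/2$ strictly (which is built into the definition) so that $2\mu>\F(S)$ strictly, and confirming $\mu$ exists, i.e.\ $\{x\in\PF(S)\mid x>\F(S)/2\}\neq\emptyset$; this holds because $\F(S)$ itself satisfies $\F(S)>\F(S)/2$ whenever $\F(S)>0$, and $\F(S)>0$ for every numerical semigroup $S\neq\N$ (the smallest possible Frobenius number of a proper numerical semigroup is $1$, attained by $\{0,2,\to\}$). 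With these observations, each of the six verifications reduces to a one-line inequality argument, and no genuinely hard step remains beyond bookkeeping the edge cases near $S=\N$.
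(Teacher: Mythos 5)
The paper states Proposition~\ref{proposition13} without proof, treating each verification as routine, so there is no paper argument to compare against; your proposal fills that gap. Your approach — check nonemptiness and the closure condition (2) for each of the six maps, using that $\F(S)\in\PF(S)$ witnesses nonemptiness and that any threshold cleared by $a$ is cleared by $a+b$ once $b>0$ — is the natural one and is essentially correct. Two small points are worth tightening. First, in item~2 you write $2\F(S)\ge\F(S)+\m(S)$; this is equivalent to $\F(S)\ge\m(S)$, which fails for ordinary semigroups $\{0,m,\to\}$ where $\F(S)=m-1$. The inequality you actually need, $2\F(S)>\F(S)$, follows directly from $\F(S)\ge 1$ for $S\neq\N$, so the conclusion $2\F(S)\in S$ stands; just drop the intermediate step. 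Second, the closure argument for items~4 and~5 leans on every element of $\PF(S)$ being a positive integer, which you flag but do not establish. This is standard and worth a line: by Proposition~\ref{proposition6} with $n=\m(S)$, every pseudo-Frobenius number has the form $w-\m(S)$ with $w$ a nonzero element of $\Ap(S,\m(S))$, hence $w\ge \m(S)+1$ and $w-\m(S)\ge 1$; alternatively, note $0\in S$ forces $0\notin\PF(S)$ and rule out negatives by testing $s=\m(S)$. With these two repairs the case-by-case check goes through, and for item~3 one can even note the stronger fact that the only possible value of $a+b$ in $\theta(S)$ is $\F(S)$ itself, since $a+b\ge\F(S)$ while every pseudo-Frobenius number is at most $\F(S)$.
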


If $\theta$ is  an $\ii$-pertinent map and $S$ is a numerical semigroup, then we can build a sequence of numerical semigroups as follows:
\begin{itemize}
	\item $S_0=S,$
	\item $S_{n+1}=\left\{\begin{array}{lr}
	S_n \cup \theta(S_n) &  \mbox{if }  S_n\neq \N\\
	\N & \mbox{otherwise.}
	\end{array}
	\right.$
\end{itemize}

\begin{theorem}\label{theorem14}
Let $S$ be a numerical semigroup, $\theta$ be  an $\ii$-pertinent map and $\{S_n\}_{n\in \N}$ be the sequence defined above. Then, there exists $\mu(\theta, S)\in \N$ such that $S=S_0\subsetneq S_1\subsetneq \dots \subsetneq S_{\mu(\theta, S)}=\N.$ Moreover, this chain is an $\ii$-chain.	
\end{theorem}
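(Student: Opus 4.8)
The plan is to show two things: that the sequence $\{S_n\}$ is strictly increasing until it reaches $\N$ and then stabilizes, and that this must happen after finitely many steps, yielding the index $\mu(\theta,S)$. The $\ii$-chain property will then be essentially immediate from the construction. First I would handle the trivial degenerate case $S=\N$, where $\mu(\theta,S)=0$ and there is nothing to prove. So assume $S\neq\N$ and look at the recursion: as long as $S_n\neq\N$, we have $S_{n+1}=S_n\cup\theta(S_n)$ where $\theta(S_n)$ is a \emph{nonempty} $\ii(S_n)$-pertinent set. By definition of an $\ii(S_n)$-pertinent set, $\theta(S_n)\subseteq\PF(S_n)$, and every element of $\PF(S_n)$ lies outside $S_n$; hence $\theta(S_n)\cap S_n=\emptyset$ and, since $\theta(S_n)$ is nonempty, $S_n\subsetneq S_{n+1}$. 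Moreover, by Proposition~\ref{proposition8} (or directly by Corollary~\ref{corollary9}/Theorem~\ref{theorem3}), $S_{n+1}=S_n\cup\theta(S_n)$ is again a numerical semigroup, so the recursion is well-defined and each $S_n$ is a numerical semigroup.

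The key finiteness argument is this: at each step with $S_n\neq\N$ we have $\sharp(\N\setminus S_{n+1})<\sharp(\N\setminus S_n)$, i.e. the genus $\g(S_n)$ strictly decreases. This is because $S_n\subsetneq S_{n+1}\subseteq\N$ forces at least one gap of $S_n$ to become an element of $S_{n+1}$. Since $\g(S_0)=\g(S)$ is a fixed nonnegative integer, the strictly decreasing sequence $\g(S_0)>\g(S_1)>\cdots$ of nonnegative integers cannot continue forever; there is a first index, call it $\mu(\theta,S)$, with $S_{\mu(\theta,S)}=\N$ (equivalently $\g(S_{\mu(\theta,S)})=0$). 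By construction, for $n<\mu(\theta,S)$ we have $S_n\neq\N$ and hence $S_n\subsetneq S_{n+1}$, which gives the strictly increasing chain $S=S_0\subsetneq S_1\subsetneq\cdots\subsetneq S_{\mu(\theta,S)}=\N$. One can even note $\mu(\theta,S)\le\g(S)$, though that is not required by the statement.

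It remains to check that this chain is an $\ii$-chain, i.e. that $S_n\in\CJ(S_{n+1})$ for every $n\in\{0,\dots,\mu(\theta,S)-1\}$. For such $n$, $S_n\neq\N$ and $S_{n+1}=S_n\cup\theta(S_n)$ with $\theta(S_n)$ an $\ii(S_n)$-pertinent set; by Corollary~\ref{corollary9}, every numerical semigroup of the form $S_n\cup A$ with $A$ an $\ii(S_n)$-pertinent set is an ideal extension of $S_n$, i.e. $S_n\in\CJ(S_{n+1})$. This completes the proof. I do not anticipate a genuine obstacle here: the only point requiring a little care is confirming that the $S_n$ stay numerical semigroups so that genus, $\PF$, and the notion of $\ii$-pertinence remain meaningful at each stage, but this is exactly what Proposition~\ref{proposition8} guarantees. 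The ``hard part,'' such as it is, is simply packaging the strictly-decreasing-genus observation cleanly to extract $\mu(\theta,S)$.
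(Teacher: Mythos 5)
Your proof is correct and follows the same line as the paper's: the strictly decreasing genus gives termination at some $\mu(\theta,S)$, and Corollary~\ref{corollary9} gives that each step $S_n\subsetneq S_{n+1}$ is an ideal extension, hence an $\ii$-chain. Your version is slightly more explicit (handling $S=\N$, noting each $S_n$ remains a numerical semigroup, and remarking $\mu(\theta,S)\le\g(S)$), but these are elaborations, not a different route.
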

\begin{proof}
If $S_i\neq \N,$ then $\theta(S_i)\neq \emptyset$ and $\theta(S_i)\subseteq \N\backslash S_i.$ Therefore, $\g(S_{i+1})< \g(S_{i}).$ Thus there is $\mu(\theta, S)\in \N$ such that $S=S_0\subsetneq S_1\subsetneq \dots \subsetneq S_{\mu(\theta, S)}=\N.$ Moreover,
as a consequence of Corollary \ref{corollary9}, we have that  $S_i\in \CJ(S_{i+1})$ for all $i\in \{0,\dots, \mu(\theta, S)-1\}.$ Hence, $S=S_0\subsetneq S_1\subsetneq \dots \subsetneq S_{\mu(\theta, S)}=\N$ is an $\ii$-chain.
 \end{proof}
As a consequence of Proposition \ref{proposition12} and Theorem \ref{theorem14} we have the following result.

\begin{corollary}\label{corollary15}
	If $S$ is a numerical semigroup, then there exists $k\in \N$ such that $S\in \CJ^k(\N).$
	
\end{corollary}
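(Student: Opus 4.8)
The plan is to derive Corollary~\ref{corollary15} as an immediate formal consequence of the two results just proved, without any new computation. First I would recall that Proposition~\ref{proposition13} guarantees the existence of at least one $\ii$-pertinent map; the simplest choice is $\theta(S)=\{\F(S)\}$, but any of the six maps listed there works, so the argument does not depend on which one we pick. Fix such a $\theta$ once and for all.

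Next, given an arbitrary numerical semigroup $S$, I would apply Theorem~\ref{theorem14} to this $S$ and this $\theta$. That theorem produces a natural number $\mu(\theta,S)$ and a strictly increasing chain
$$
S=S_0\subsetneq S_1\subsetneq \dots \subsetneq S_{\mu(\theta,S)}=\N,
$$
and, crucially, asserts that this chain is an $\ii$-chain. In the language of Section~2, this is exactly an $\ii$-chain of length $k:=\mu(\theta,S)$ connecting $S$ with $\N$.

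Finally I would invoke Proposition~\ref{proposition12}, which says that $S\in\CJ^k(\N)$ if and only if there is an $\ii$-chain of length $k$ connecting $S$ with $\N$. Since we have just exhibited such a chain with $k=\mu(\theta,S)$, the "if" direction gives $S\in\CJ^{\mu(\theta,S)}(\N)$, and taking $k=\mu(\theta,S)$ completes the proof. I do not expect any genuine obstacle here: the entire content has been front-loaded into Theorem~\ref{theorem14} (finiteness of the stabilising chain, via the strictly decreasing genus) and Proposition~\ref{proposition12} (the dictionary between the iterated operator $\CJ^k$ and $\ii$-chains). The only thing to be slightly careful about is to note explicitly that $\CL\backslash\{\N\}$ is where $\theta$ is defined, so the recursion for $S_{n+1}$ is well posed and terminates at $\N$; this is already handled inside Theorem~\ref{theorem14}, so the corollary is genuinely a one-line deduction.
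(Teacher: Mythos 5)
Your proposal is correct and follows exactly the route the paper intends: the paper states Corollary~\ref{corollary15} as an immediate consequence of Proposition~\ref{proposition12} and Theorem~\ref{theorem14}, and you combine precisely those two results (choosing some $\ii$-pertinent map $\theta$, extracting the $\ii$-chain of length $\mu(\theta,S)$ from Theorem~\ref{theorem14}, and translating it to $S\in\CJ^{\mu(\theta,S)}(\N)$ via Proposition~\ref{proposition12}). There is nothing to add; your remark about $\theta$ being defined only on $\CL\backslash\{\N\}$ is a fair point of care but is already absorbed into the statement of Theorem~\ref{theorem14}.
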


The previous corollary allows us to give the following definition: {\it the complexity} of a numerical semigroup $S$, denoted by $\C(S)$, is the minimun of the set $\{k\in \N\mid S\in \CJ^k(\N)\}.$ Note that $\N$ is the unique numerical semigroup with complexity zero.

As an immediate consequence of Proposition \ref{proposition12}, we have the following result.

\begin{proposition}\label{proposition16} Let $S$ be a numerical semigroup. Then $\C(S)$ is the minimum of the lenghts of the $\ii$-chains connecting $S$ with $\N.$
\end{proposition}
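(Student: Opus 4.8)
The plan is to read the statement off Proposition~\ref{proposition12} with essentially no extra work, since that proposition already converts the algebraic condition $S\in\CJ^k(\N)$ into the combinatorial one of having an $\ii$-chain of length $k$ from $S$ to $\N$.

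First I would recall that, by the definition adopted just after Corollary~\ref{corollary15}, $\C(S)=\min\{k\in\N\mid S\in\CJ^k(\N)\}$, and that this set is nonempty, so that the minimum is actually attained, by Corollary~\ref{corollary15}. Next I would invoke Proposition~\ref{proposition12}: for every $k\in\N$, $S\in\CJ^k(\N)$ holds if and only if there is an $\ii$-chain of length $k$ connecting $S$ and $\N$. Consequently the two sets
\[
\{k\in\N\mid S\in\CJ^k(\N)\}\quad\text{and}\quad\{k\in\N\mid \text{there is an }\ii\text{-chain of length }k\text{ from }S\text{ to }\N\}
\]
coincide. Equal sets have equal minima, so the minimum of the second set equals $\C(S)$; and the minimum of the second set is precisely what we mean by the minimum of the lengths of the $\ii$-chains connecting $S$ with $\N$. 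This is exactly the assertion.

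I do not anticipate any genuine obstacle: the whole content is packaged in Proposition~\ref{proposition12}, and what remains is the formal remark that two sets which coincide have the same minimum, together with the nonemptiness already secured by Corollary~\ref{corollary15}. In particular, one does not need to verify that $\ii$-chains of every length $\ge\C(S)$ exist; only the least achievable length is relevant, and that is controlled directly by the equivalence above.
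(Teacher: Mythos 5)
Your argument is correct and is precisely the route the paper takes: Proposition~\ref{proposition16} is presented there as an immediate consequence of Proposition~\ref{proposition12}, with Corollary~\ref{corollary15} guaranteeing nonemptiness of the relevant set, exactly as you observe. You have simply spelled out the one-line remark that equal (nonempty) subsets of $\N$ have equal minima.
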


The following result is an immediate consequence from previous proposition.

\begin{corollary}\label{corollary17}
	If $S$ is a numerical semigroup and $\theta$ is an $\ii$-pertinent map, then $\C(S)\leq \mu(\theta,S).$
\end{corollary}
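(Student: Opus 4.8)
The plan is to combine Theorem \ref{theorem14} with Proposition \ref{proposition16} directly, with no new computation required. First I would invoke Theorem \ref{theorem14}: for the given $\ii$-pertinent map $\theta$, the associated sequence $\{S_n\}_{n\in\N}$ stabilizes at $\N$ after exactly $\mu(\theta,S)$ steps and yields an $\ii$-chain
$$S = S_0 \subsetneq S_1 \subsetneq \dots \subsetneq S_{\mu(\theta,S)} = \N$$
connecting $S$ with $\N$, which by definition has length $\mu(\theta,S)$.

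Then I would apply Proposition \ref{proposition16}, which asserts that $\C(S)$ equals the minimum of the lengths of all $\ii$-chains connecting $S$ with $\N$. Since the chain exhibited in the previous step is one such $\ii$-chain, its length $\mu(\theta,S)$ is bounded below by this minimum; that is, $\C(S) \leq \mu(\theta,S)$, which is the assertion.

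There is essentially no obstacle here: the substantive work has already been carried out in Theorem \ref{theorem14} (existence of the stabilizing chain and the verification that it is genuinely an $\ii$-chain, via Corollary \ref{corollary9}) and in Proposition \ref{proposition16} (the chain-length characterization of the complexity). The only point deserving a sentence of care is that the chain built from $\theta$ is strictly increasing and terminates, but this is precisely the genus-drop observation $\g(S_{i+1}) < \g(S_i)$ already established in the proof of Theorem \ref{theorem14}, so nothing further is needed.
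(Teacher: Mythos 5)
Your proof is correct and follows exactly the paper's intended argument: Theorem \ref{theorem14} produces an $\ii$-chain of length $\mu(\theta,S)$ connecting $S$ with $\N$, and Proposition \ref{proposition16} identifies $\C(S)$ as the minimum such length, so $\C(S)\leq\mu(\theta,S)$. The paper states the corollary as an immediate consequence of the preceding proposition, and your write-up simply spells out the same short deduction.
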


We complete this section by showing the numerical semigroups with complexities one and two.
For this purpose, we need to recall the definition of ordinary numerical semigroups: a numerical semigroup $S$ is {\it ordinary} when $S$ is equal to $\{0,\m(S), \rightarrow \}.$

\begin{proposition}\label{proposition18}
	Let $S$ be a numerical semigroup. Then $\C(S)=1$ if and only if $S$ is an ordinary numerical semigroup and $S\neq \N.$	
\end{proposition}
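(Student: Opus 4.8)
\textbf{Proof proposal for Proposition~\ref{proposition18}.}
The plan is to prove both implications by translating the statement ``$\C(S)=1$'' into the language of $\ii$-chains via Proposition~\ref{proposition16}. Indeed, $\C(S)=1$ holds if and only if the shortest $\ii$-chain connecting $S$ with $\N$ has length $1$, which happens precisely when $S\neq\N$ (so the length is not $0$) and $S\in\CJ(\N)$. So the whole proposition reduces to the claim: for $S\neq\N$, we have $S\in\CJ(\N)$ if and only if $S$ is ordinary. By Theorem~\ref{theorem3} applied with $\Delta=\N$, the condition $S\in\CJ(\N)$ (i.e.\ $\N$ is an ideal extension of $S$) is equivalent to $S\subseteq\N\subseteq S\cup\PF(S)$, and since $S\subseteq\N$ is automatic, this is just $\N\setminus S\subseteq\PF(S)$, i.e.\ $\N=S\cup\PF(S)$.

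So the core step is: for a numerical semigroup $S\neq\N$, one has $\N\setminus S\subseteq\PF(S)$ if and only if $S$ is ordinary. For the ``if'' direction, if $S=\{0,\m(S),\rightarrow\}$ then $\N\setminus S=\{1,2,\dots,\m(S)-1\}$; given any $x$ in this set and any nonzero $s\in S$, we have $s\ge\m(S)$, hence $x+s>\m(S)-1$, so $x+s\in S$, and since $x\notin S$ this shows $x\in\PF(S)$. Thus $\N\setminus S\subseteq\PF(S)$. For the ``only if'' direction, suppose $S$ is not ordinary, so there is an integer $n$ with $\m(S)<n$ and $n\notin S$ (the smallest gap of $S$ exceeding $\m(S)$ witnesses non-ordinariness, as $\F(S)>\m(S)$ when $S$ is not ordinary and $S\neq\N$). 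Actually it is cleaner to pick $x=\F(S)$: since $S\neq\N$, $\F(S)\ge1$; and I would argue that if $S$ is not ordinary then $\F(S)-\m(S)$ is a gap of $S$ — more precisely, I take $x$ to be any gap of $S$ with $x>\m(S)$, which exists because non-ordinary means $\m(S)\notin\{0\}\cup\{m(S),\rightarrow\}\setminus S$ fails, i.e.\ some integer $>\m(S)$ is missing. Then $x-\m(S)$ is a positive integer; if $x-\m(S)\in S$ then $x-\m(S)\ge\m(S)$ would be fine but we need a contradiction — so instead I observe that $\m(S)\in S\setminus\{0\}$ while $x=(x-\m(S))+\m(S)$ and $x\notin S$ forces $x-\m(S)\notin S$ only if... this needs care. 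The clean witness is: let $x$ be the \emph{smallest} element of $\N\setminus S$ that is $>\m(S)$; then $x-\m(S)<x$ and $x-\m(S)>0$, and by minimality together with $x-\m(S)$ possibly being $\le\m(S)$, we get $x-\m(S)\in S$; then $x=(x-\m(S))+\m(S)$ with both summands in $S\setminus\{0\}$, contradicting $x\notin S$. Hence such $x$ cannot exist, so $x\notin\PF(S)$ for that gap — wait, this shows no such gap exists, i.e.\ $S$ IS ordinary. So to show non-ordinary implies $\N\setminus S\not\subseteq\PF(S)$ I instead note: if $S$ is not ordinary, there is a gap $x>\m(S)$; pick the largest such gap, namely check whether $\F(S)>\m(S)$. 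Since $S$ is not ordinary and $S\neq\N$, indeed $\F(S)>\m(S)$. Now $\F(S)-\m(S)\ge1$; if $\F(S)-\m(S)\in S$, then adding $\m(S)$ gives $\F(S)\in S$, absurd; so $\F(S)-\m(S)\notin S$, i.e.\ it is a gap, and $\F(S)-\m(S)+\m(S)=\F(S)\notin S$ shows $\m(S)$ witnesses that $\F(S)-\m(S)\notin\PF(S)$ is false... hmm, $\PF$ requires $x+s\in S$ for \emph{all} nonzero $s$; here $s=\m(S)$ gives $x+s=\F(S)\notin S$, so $x=\F(S)-\m(S)$ is a gap that is \emph{not} pseudo-Frobenius. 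That is exactly a gap outside $\PF(S)$, so $\N\setminus S\not\subseteq\PF(S)$, completing the contrapositive.

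I would then assemble these pieces: $\C(S)=1 \iff S\neq\N$ and $\N\setminus S\subseteq\PF(S) \iff S\neq\N$ and $S$ is ordinary, where the middle equivalence is Theorem~\ref{theorem3} with $\Delta=\N$ and Proposition~\ref{proposition16}. The main obstacle is purely the elementary argument in the ``not ordinary $\Rightarrow$ gap outside $\PF(S)$'' direction; the cleanest route, as sketched, is to exhibit $\F(S)-\m(S)$ as an explicit gap that fails the pseudo-Frobenius condition via $s=\m(S)$, using that $\F(S)>\m(S)$ whenever $S$ is non-ordinary and $S\neq\N$. Everything else is immediate from Theorem~\ref{theorem3} and Proposition~\ref{proposition16}.
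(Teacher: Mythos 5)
Your argument is correct and follows essentially the same route as the paper: reduce $\C(S)=1$ to $S\in\CJ(\N)$ via Proposition~\ref{proposition16}, then invoke Theorem~\ref{theorem3} with $\Delta=\N$ to rewrite this as $\N\setminus S\subseteq\PF(S)$, and finally identify this condition with ordinariness. You merely spell out the last equivalence in more detail than the paper (which simply asserts ``and thus $S=\{0,\m(S),\rightarrow\}$''), and your choice of $\F(S)-\m(S)$ as an explicit gap failing the pseudo-Frobenius test, witnessed by $s=\m(S)$, is a clean way to justify that step.
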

\begin{proof}
	{\it Necessity.} If $\C(S)=1,$ then $S\subsetneq \N$
	is an $\ii$-chain and $S\in \CJ(\N)$. By Theorem \ref{theorem3}, $\PF(S)=\N\setminus S$, and thus $S=\{0,\m(S), \rightarrow \}$.
	
	{\it Sufficienty.} If $S$ is an ordinary semigroup and  $S \neq \N,$ then there is $m\in \N\backslash \{0,1\}$ such that $S=\{0,m, \rightarrow \}.$ It is clear that $\{0,m, \rightarrow \}\subsetneq \N$ is an $\ii$-chain. By applying now Proposition \ref{proposition16}, we have $\C(S)=1.$
\end{proof}
Following the notation introduced in \cite{blanco}, a numerical semigroup $S$ is {\it elementary } if $\F(S)<2\,\m(S).$
The following result is deduced from Lemma 2.1 of \cite{elementales}.

\begin{lemma}\label{lemma19}
	A numerical semigroup $S$ is elementary and not ordinary if and only if $S=\{0,m\}\cup A \cup \{2m,\rightarrow\}$ where $m\in \N\backslash \{0,1\}$ and $A\subsetneq \{m+1,m+2,\dots, 2m-1\}.$
	\end{lemma}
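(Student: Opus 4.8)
The plan is to prove Lemma \ref{lemma19} by a direct double-inclusion argument, translating the defining inequality $\F(S)<2\m(S)$ into explicit membership conditions on the integers between $\m(S)$ and $2\m(S)$.

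First I would prove the ``if'' direction, which is the easy one. Suppose $S=\{0,m\}\cup A\cup\{2m,\rightarrow\}$ with $m\ge 2$ and $A\subsetneq\{m+1,\dots,2m-1\}$. One must check that $S$ is indeed a numerical semigroup: it contains $0$, its complement in $\N$ is contained in $\{1,\dots,2m-1\}$ hence finite, and it is closed under addition because the sum of any two nonzero elements of $S$ is at least $m+m=2m$, so it lands in $\{2m,\rightarrow\}\subseteq S$. Then $\m(S)=m$ since $m\in S$ and $1,\dots,m-1\notin S$, and $\F(S)\le 2m-1<2m=2\m(S)$, so $S$ is elementary. It is not ordinary because $A\subsetneq\{m+1,\dots,2m-1\}$ means some integer in $(m,2m)$ is missing from $S$, whereas an ordinary semigroup with multiplicity $m$ would contain every integer $\ge m$.

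For the ``only if'' direction, assume $S$ is elementary and not ordinary, and set $m=\m(S)$. Since $S\neq\N$ (a non-ordinary semigroup cannot be $\N$) and $m\in S$ is the least positive element, we have $1,\dots,m-1\notin S$, so $\{0,m\}\subseteq S$ and $S\cap\{1,\dots,m-1\}=\emptyset$. The key step is to show $\{2m,2m+1,\dots\}\subseteq S$: for any $x\ge 2m$ we have $x>\F(S)$ because $\F(S)<2m$, and every integer exceeding the Frobenius number lies in $S$. Now set $A=S\cap\{m+1,\dots,2m-1\}$; combining the three pieces gives $S=\{0,m\}\cup A\cup\{2m,\rightarrow\}$. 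Finally $A\neq\{m+1,\dots,2m-1\}$, for otherwise $S$ would contain $0,m$ and every integer $\ge m+1$, hence be $\{0,m,\rightarrow\}$, which is ordinary, contradicting our hypothesis; thus $A\subsetneq\{m+1,\dots,2m-1\}$.

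I do not anticipate a serious obstacle here; the only point requiring a little care is making sure the set in the ``if'' direction really is closed under addition and that its multiplicity is exactly $m$ (which needs $m\ge 2$, guaranteed by hypothesis). The statement is essentially a reformulation of the definition of ``elementary'' once one observes that the interval $[m,2m)$ is the only place where an elementary semigroup can behave non-trivially, and the excerpt already tells us this follows from Lemma 2.1 of \cite{elementales}, so an alternative would be to simply invoke that result and spell out the translation of notation.
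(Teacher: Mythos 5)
Your proof is correct and complete. Worth noting: the paper does not actually prove Lemma~\ref{lemma19}; it simply states that the result ``is deduced from Lemma 2.1 of \cite{elementales}'' and leaves the deduction to the reader, so there is no in-text argument to compare yours against. Your self-contained double-inclusion proof is exactly the routine translation one would expect: the ``if'' direction checks closure (sum of two nonzero elements is $\ge 2m$, hence in $S$), identifies $\m(S)=m$, bounds $\F(S)\le 2m-1<2\m(S)$, and uses the proper-subset hypothesis to rule out ordinariness; the ``only if'' direction uses $\F(S)<2\m(S)$ to force $\{2m,\rightarrow\}\subseteq S$, sets $A=S\cap\{m+1,\dots,2m-1\}$, and rules out $A=\{m+1,\dots,2m-1\}$ via the non-ordinary hypothesis. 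All the small points (that $S\neq\N$ so $m\ge 2$, and that $\N$ is itself ordinary) are handled. This is a clean replacement for the external citation and buys self-containedness at essentially no cost.
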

\begin{proposition}\label{proposition20}
	Let $S$ be a numerical semigroup. Then $\C(S)=2$ if and only if $S$ is an elementary and not ordinary semigroup.
\end{proposition}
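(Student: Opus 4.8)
The plan is to prove both implications using the characterizations already established, namely Proposition~\ref{proposition18} (which pins down complexity one), Lemma~\ref{lemma19} (which describes elementary non-ordinary semigroups explicitly), and Proposition~\ref{proposition16} (complexity equals minimal $\ii$-chain length to $\N$).

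For the \emph{necessity} direction, suppose $\C(S)=2$. Then there is an $\ii$-chain $S=S_0\subsetneq S_1\subsetneq S_2=\N$ of length two; in particular $S_1\in\CJ(\N)$, so by Proposition~\ref{proposition18} the semigroup $S_1$ is ordinary, say $S_1=\{0,\m(S_1),\rightarrow\}$. Since $S\in\CJ(S_1)$, Theorem~\ref{theorem3} gives $S\subseteq S_1\subseteq S\cup\PF(S)$, and $\PF(S_1)=\N\setminus S_1$. I would then argue that $\m(S)=\m(S_1)$ (the multiplicity is unchanged when passing to an ideal extension, since $\m(S_1)$ is the smallest nonzero element of $S\subseteq S_1$ and actually lies in $S$ because $\m(S_1)+\m(S_1)\in S_1\setminus\{0\}\subseteq S$ forces... — more carefully, every element of $S_1$ that is $\geq 2\m(S_1)$ lies in $S$, so $\m(S)\leq 2\m(S_1)$, and combined with $S\subseteq S_1$ one gets control). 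The key consequence is $\F(S)<2\,\m(S_1)=2\,\m(S)$: indeed $S_1$ contains everything from $\m(S_1)$ onward, and $S$ differs from $S_1$ only by omitting some elements in $\{\m(S_1)+1,\dots,2\m(S_1)-1\}$ (elements $\geq 2\m(S_1)$ cannot be dropped, as they are sums of two nonzero elements of $S_1$ hence of $S$). Hence $S$ is elementary, and it is not ordinary because $\C(S)=2\neq 1$ rules that out by Proposition~\ref{proposition18}.

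For the \emph{sufficiency} direction, suppose $S$ is elementary and not ordinary. By Lemma~\ref{lemma19}, $S=\{0,m\}\cup A\cup\{2m,\rightarrow\}$ with $m\geq 2$ and $A\subsetneq\{m+1,\dots,2m-1\}$. I would exhibit an $\ii$-chain of length two: set $S_1=\{0,m,\rightarrow\}$, the ordinary semigroup with the same multiplicity. Then $S\subseteq S_1$, and $S_1\setminus S\subseteq\{m+1,\dots,2m-1\}\subseteq\PF(S)$ since each such integer $x$ satisfies $x\notin S$ and $x+s\in S$ for every $s\in S\setminus\{0\}$ (because $x+s\geq m+1+m=2m+1>2m$). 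Thus $S\subseteq S_1\subseteq S\cup\PF(S)$, so by Theorem~\ref{theorem3} $S\in\CJ(S_1)$; and $S_1\in\CJ(\N)$ because $S_1$ is ordinary (Proposition~\ref{proposition18}). Therefore $S=S_0\subsetneq S_1\subsetneq\N$ is an $\ii$-chain of length two, giving $\C(S)\leq 2$ by Proposition~\ref{proposition16}. Finally $\C(S)\neq 0$ (as $S\neq\N$) and $\C(S)\neq 1$ (as $S$ is not ordinary, by Proposition~\ref{proposition18}), so $\C(S)=2$.

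The main obstacle I anticipate is the necessity direction, specifically the bookkeeping showing that an ideal extension cannot decrease the multiplicity and cannot omit any integer $\geq 2\m(S)$ — equivalently, that the single intermediate ordinary semigroup $S_1$ in the chain forces $\F(S)<2\m(S)$. The clean way to handle this is to note that for an ordinary $S_1=\{0,n,\rightarrow\}$ with $S\in\CJ(S_1)$, Theorem~\ref{theorem3} forces $S\supseteq S_1\setminus(\text{some subset of }\PF(S))$ where $\PF(S_1)=\{1,\dots,n-1\}$, but actually one must use that $\PF(S)\cap S_1$ only meets $\{n, n+1, \dots\}$ in the range below $2n$; this is where I would invoke that any $x\geq 2n$ with $x\in S_1$ is a sum $a+b$ with $a,b\in S_1\setminus\{0\}$, and if $S$ is an ideal extension then such sums already lie in $S$. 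Making this last point fully rigorous — that $S\cup\PF(S)$ never reinstates an element $\geq 2n$ beyond those in $S$ — is the crux, and everything else is routine.
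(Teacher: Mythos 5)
Your sufficiency direction is essentially the paper's (exhibit the $\ii$-chain $S\subsetneq\{0,m,\rightarrow\}\subsetneq\N$), though you slightly overstate an intermediate inclusion: $\{m+1,\dots,2m-1\}\subseteq\PF(S)$ is false whenever $A\neq\emptyset$ — you only need, and only have, $S_1\setminus S\subseteq\PF(S)$. That is easily repaired.

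The necessity direction, however, has a genuine gap. You claim that $\m(S)=\m(S_1)$ and, more importantly, that ``every element of $S_1$ that is $\ge 2\m(S_1)$ lies in $S$'' because such an element is a sum of two nonzero elements of $S_1$. That inference is wrong: the hypothesis is that $S\setminus\{0\}$ is an ideal of $S_1$, which gives $(S\setminus\{0\})+S_1\subseteq S$, not $(S_1\setminus\{0\})+(S_1\setminus\{0\})\subseteq S$. A concrete counterexample: take $S_1=\{0,2,\rightarrow\}$ and $S=\{0,4,6,7,8,\rightarrow\}$. Then $S\setminus\{0\}$ is an ideal of $S_1$ (so $S\in\CJ(S_1)$), yet $\m(S)=4\neq 2=\m(S_1)$, and $5\in S_1$ with $5\ge 2\m(S_1)=4$ but $5\notin S$; also $\F(S)=5\not<4=2\m(S_1)$. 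So both your claim $\m(S)=\m(S_1)$ and your bound $\F(S)<2\m(S_1)$ fail. The correct argument, which is what the paper does, is the following: since $S\setminus\{0\}$ is an ideal of $S_1=\{0,n,\rightarrow\}$, we have $\m(S)+s\in S$ for every $s\in S_1\setminus\{0\}=\{n,\rightarrow\}$, hence $\{\m(S)+n,\rightarrow\}\subseteq S$ and so $\F(S)<\m(S)+n$; combined with $n\le\m(S)$ (which follows from $S\subseteq S_1$), this yields $\F(S)<2\m(S)$. You should replace your reasoning with this cleaner ideal-based estimate; the rest of your proof is fine.
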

\begin{proof}
	{\it Necessity.} If $\C(S)=2,$ then $S$ is not an ordinary semigroup since these semigroups have complexity $0$ or $1$. Moreover, by Proposition \ref{proposition16}, we know that there is an $\ii$-chain $S=S_0\subsetneq S_1 \subsetneq S_2=\N.$ Then we deduce that $\C(S_1)=1$ and by Proposition \ref{proposition18} we have $S_1=\{0,m, \rightarrow \}$ for some $m\in\N\backslash \{0,1\}.$ As $S\in \CJ(S_1),$ then $m\leq\m(S)$ and $\{\m(S)+m,\rightarrow\}\subseteq S.$ Thus, $\F(S)<\m(S)+m\leq 2\,\m(S).$ Hence $S$ is an elementary and not ordinary semigroup.
	
	{\it Sufficienty.} If $S$ is an elementary and not ordinary semigroup, then by Lemma \ref{lemma19}, we know that  $S=\{0,m\}\cup A \cup \{2m,\rightarrow\}$ with $m\in \N\backslash \{0,1\}$ and $A\subsetneq \{m+1,\dots, 2m-1\}.$ It is clear that $S \subsetneq \{0,m,\rightarrow\}\subsetneq \N$ is an $\ii$-chain. By Proposition \ref{proposition16}, $\C(S)=2.$
\end{proof}

\section{A formula for  the complexity}  	

Let $S$ be a numerical semigroup. By Proposition \ref{proposition18}, we know that $\C(S)=1$ if and only if $0 \cdot \m(S)<\F(S)<1\cdot \m(S).$ Also observe that from Proposition \ref{proposition20}, we deduce $\C(S)=2$ if and only if  $1 \cdot \m(S)<\F(S)<2\cdot \m(S).$ The following result generalizes these two properties.

\begin{theorem}\label{theorem21} Let $S$ be a numerical semigroup such that $S\neq \N.$ Then, $\C(S)=k$ if and only if $(k-1)  \m(S)<\F(S)<k \m(S).$	
\end{theorem}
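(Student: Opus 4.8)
The plan is to prove both implications by exhibiting or analyzing $\ii$-chains connecting $S$ with $\N$, using Proposition \ref{proposition16} which identifies $\C(S)$ with the minimum length of such a chain. For the "if" direction, I would construct a short chain; for the "only if" direction, I would show that no chain can be shorter than $k$ by tracking how much the Frobenius number can drop at each step.

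For sufficiency, suppose $(k-1)\m(S)<\F(S)<k\,\m(S)$. The natural candidate is the chain produced by an $\ii$-pertinent map from Proposition \ref{proposition13}; the cleanest choice is $\theta(S)=\{x\in\PF(S)\mid x>\F(S)-\m(S)\}$ (item 4), since adding these elements pushes the Frobenius number down by roughly one multiplicity each step. Concretely, first I would observe that if $T$ is a numerical semigroup with $\m=\m(S)$ (note $\m(S_i)$ may only decrease along the chain, but one can check $\m$ stays equal to $\m(S)$ until the last step, or handle the multiplicity carefully), then $T\cup\theta(T)$ has Frobenius number at most $\F(T)-\m(S)$ when $\F(T)\ge \m(S)$, because every gap of $T$ in the interval $(\F(T)-\m(S),\F(T)]$ lies in $\PF(T)$... actually more care is needed here: not every such gap is a pseudo-Frobenius number. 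The correct statement to use is that after adding $\theta(T)$, the new Frobenius number is strictly less than $\F(T)-\m(S)+\m(S)=\F(T)$, and more precisely one shows $\F(S_{i+1})<\F(S_i)-\m(S)+\m(S)$; iterating and using $\F(S)<k\,\m(S)$ gives that the chain reaches $\N$ in at most $k$ steps, so $\C(S)\le k$ by Corollary \ref{corollary17}. The reverse bound $\C(S)\ge k$ for this half is not needed; instead I would prove necessity separately.

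For necessity, suppose $\C(S)=k$ and let $S=S_0\subsetneq S_1\subsetneq\dots\subsetneq S_k=\N$ be a minimal $\ii$-chain (Proposition \ref{proposition16}). The key lemma is: if $T'\in\CJ(T)$, i.e. $T\subseteq T'\subseteq T\cup\PF(T)$ (Theorem \ref{theorem3}), then $\F(T)-\F(T')<\m(T)\le\m(T')$; equivalently $\F(T)<\F(T')+\m(T)$. This holds because $\F(T)\in\PF(T)$ is a pseudo-Frobenius number, so $\F(T)-s\notin T$ is impossible to upgrade arbitrarily — more precisely, the elements of $T'\setminus T$ all lie in $\PF(T)\subseteq\{x:\ \F(T)-\m(T)<x\le\F(T)\}$ is false in general, but what is true is that $\PF(T)$ contains no element below... hmm. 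The honest approach: every $x\in\PF(T)$ satisfies $x+\m(T)\in T$, so if $y>\F(T)$ then $y\in T$ automatically; thus $\F(T')\ge \F(T)-\m(T)$ fails only if all integers in $(\F(T)-\m(T),\F(T)]$ were added, but those include non-pseudo-Frobenius gaps which cannot be added while staying an ideal extension. Pinning this down — that $\F$ drops by strictly less than one multiplicity per $\CJ$-step — is the crux. Granting it, telescoping over the chain gives $\F(S)=\F(S_0)<\F(S_k)+k\max_i\m(S_i)$; since $\F(\N)=-1$ and all multiplicities are $\le\m(S_k)$... one must be careful since multiplicities shrink. Instead use $\m(S_i)\ge\m(S_{i-1})$? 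No: $S_{i-1}\subseteq S_i$ forces $\m(S_{i-1})\ge\m(S_i)$. So $\m(S_i)\le\m(S)$ for all $i$, giving $\F(S)<-1+k\,\m(S)<k\,\m(S)$. For the lower bound $\F(S)>(k-1)\m(S)$: if instead $\F(S)\le(k-1)\m(S)$, the sufficiency construction would produce a chain of length $\le k-1$, contradicting minimality of $k=\C(S)$.

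The main obstacle I anticipate is the per-step Frobenius estimate, specifically showing that in any ideal extension step $T\subseteq T'\subseteq T\cup\PF(T)$ one has $\F(T')>\F(T)-\m(T)$. This requires understanding which gaps of $T$ are pseudo-Frobenius numbers and arguing that an entire "window" of length $\m(T)$ of consecutive gaps just below $\F(T)$ cannot all be absorbed — equivalently, that $\PF(T)$ together with the $\ii$-pertinence (closure) condition of Proposition \ref{proposition8} never lets the Frobenius number jump down by a full multiplicity in one step. I would isolate this as a lemma, prove it via Proposition \ref{proposition5} and \ref{proposition6} (Apéry sets), and then the telescoping argument above is routine.
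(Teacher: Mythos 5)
Your strategy (track the Frobenius drop along a minimal $\ii$-chain and telescope) is genuinely different from the paper's proof, which argues by induction on $k$: for necessity the paper observes that $S\setminus\{0\}$ is an ideal of some $\Delta$ with $\C(\Delta)=k-1$, so $\{\m(S)+(k-1)\m(\Delta),\to\}\subseteq S$ and $\m(\Delta)\le\m(S)$, hence $\{k\,\m(S),\to\}\subseteq S$; for sufficiency it takes $T=S\cup\{(k-1)\m(S),\to\}$ and applies the induction hypothesis to $T$. Your telescoping can be made to work, but as written there is a real gap at exactly the point you flag as ``the crux.''

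The per-step lemma as you state it, $\F(T')>\F(T)-\m(T)$ whenever $T\subseteq T'\subseteq T\cup\PF(T)$, is false. Take $T=\{0,m,\to\}$ and $T'=\N$: then $\F(T')=-1=\F(T)-\m(T)$. It also fails with $T'\neq\N$: for $T=\langle 3,5,7\rangle=\{0,3,5,6,7,\to\}$ one has $\PF(T)=\{2,4\}$, which is $\ii(T)$-pertinent, and $T'=T\cup\{2,4\}=\langle 2,3\rangle$ gives $\F(T')=1=\F(T)-\m(T)$. The correct and sufficient statement is the weak inequality $\F(T')\ge\F(T)-\m(T)$, whose proof you do not supply but is short: if $\F(T)-\m(T)\in T'=T\cup A$ with $A\subseteq\PF(T)$, then either $\F(T)-\m(T)\in T$ or $\F(T)-\m(T)\in\PF(T)$, and either way adding $\m(T)\in T\setminus\{0\}$ gives $\F(T)\in T$, a contradiction. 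With $\ge$ in hand your telescoping still yields $\F(S)\le\F(\N)+k\,\m(S)=k\,\m(S)-1<k\,\m(S)$, so the final conclusion survives, but you would need to correct the lemma and actually prove it. Two smaller issues: you write $T'\in\CJ(T)$ where you mean $T\in\CJ(T')$ (harmless but confusing), and your sufficiency sketch leaves the multiplicity bookkeeping unresolved; the paper's $\gamma$-map (Proposition \ref{proposition24}) handles that cleanly by showing the multiplicity is preserved at each step, which is worth invoking rather than the $\theta$ from Proposition \ref{proposition13}(4).
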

\begin{proof}
	
We make this proof by induction on $k$. For $k\in\{1,2\}$ the result is true, so we assume $k\geq 3$.

{\it Necessity.}
If $\C(S)=k$, then, by the induction hypothesis, we have that $(k-1)\m(S)<\F(S)$. We also know that $S\backslash\{0\}$ is ideal of a numerical semigroup $\Delta$ such that $\C(\Delta)=k-1$. Using the induction hypothesis, we have $(k-2)\m(\Delta)<\F(\Delta)<(k-1)\m(\Delta)$. Since $S\backslash \{0\}$ is an ideal of $\Delta$, we have that  $\{\m(S)+(k-1)\m(\Delta),\to\}\subseteq S$ and $\m(\Delta)\leq \m(S)$. Hence $\{k\m(S),\to\}\subseteq S$, and thus $\F(S)< k\m(S)$.
	
{\it Sufficienty.}
If $(k-1)\m(S)<\F(S)$, then, by the induction hypothesis, $\C(S)\geq k$. Clearly $T=S\cup\{(k-1)\m(S),\to\}$ is a numerical semigroup. By the induction hypothesis, it fulfills that $\C(T)\leq k-1$. It is straightforward to prove that $S\backslash \{0\}$ is an ideal of $T$. Therefore $\C(S)\leq \C(T)+1=k$ and thus $\C(S)=k$.
\end{proof}

As an immediate consequence of previous theorem we have the following result.
\begin{corollary}\label{corollary22}
If $S$ is a numerical semigroup, then $\C(S)=\left\lfloor\frac{\F(S)}{\m(S)}\right\rfloor+1.$	
\end{corollary}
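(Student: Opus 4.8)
The plan is to derive the closed formula directly from Theorem \ref{theorem21}, which characterizes $\C(S)=k$ by the double inequality $(k-1)\m(S)<\F(S)<k\,\m(S)$. First I would dispose of the trivial case $S=\N$ separately, since then $\F(S)$ is not a natural number; by convention $\F(\N)=-1$ (or one simply notes $\C(\N)=0$ and checks the formula gives $\lfloor -1/\m \rfloor + 1 = 0$ with the usual convention $\m(\N)=1$, or one restricts the statement to $S\neq\N$ as elsewhere in the paper). For $S\neq\N$, set $k=\C(S)$, which is a positive integer by Corollary \ref{corollary15} and the remark that $\N$ is the only semigroup of complexity zero.

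Next I would apply Theorem \ref{theorem21} to get $(k-1)\m(S)<\F(S)<k\,\m(S)$. Dividing through by the positive integer $\m(S)$ yields $k-1 < \frac{\F(S)}{\m(S)} < k$. This says precisely that the rational number $\frac{\F(S)}{\m(S)}$ lies strictly between the consecutive integers $k-1$ and $k$, hence is not itself an integer, and therefore $\left\lfloor \frac{\F(S)}{\m(S)} \right\rfloor = k-1$ by definition of the floor function. Adding $1$ gives $\left\lfloor \frac{\F(S)}{\m(S)} \right\rfloor + 1 = k = \C(S)$, as desired.

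There is essentially no obstacle here: the only point requiring the tiniest care is that the inequalities in Theorem \ref{theorem21} are strict on both sides, so that $\F(S)/\m(S)$ is genuinely strictly between two integers and the floor is unambiguously $k-1$; one should also confirm that $\gcd$-type divisibility never forces $\m(S)\mid \F(S)$ in a way that would collapse the argument — but strictness already guarantees $\m(S)\nmid\F(S)$, so nothing further is needed. Thus the corollary follows immediately, and the proof is a one-line consequence of Theorem \ref{theorem21}.
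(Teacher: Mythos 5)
Your proof is correct and matches the paper's intent exactly: the paper states Corollary~\ref{corollary22} as an immediate consequence of Theorem~\ref{theorem21}, and your argument—dividing the strict double inequality by $\m(S)$ to pin down the floor, plus the separate check for $S=\N$—is precisely the short deduction the authors had in mind.
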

\begin{example}\label{example23}Let $S=\langle 5,7\rangle=\{0,5,7,10,12,14,15,17,19,20,21,22,24,25,\rightarrow\}.$ Then $\m(S)=5$ and $\F(S)=23.$ By applying Corollary \ref{corollary22}, we have that $\C(S)=\left\lfloor \frac{23}{5}\right\rfloor+1=5.$
	\end{example}

By Proposition \ref{proposition16}, we know that if $S$ is a numerical semigroup then there is an $\ii$-chain of length $\C(S)$ connecting $S$ and $\N.$ Our next aim in this section is to present an   $\ii$-chain with these conditions.

If $S$ is a numerical semigroup such that $S\neq \N,$ then we denote by $\gamma(S)=\{x\in\N\backslash S \mbox{ such that }\left\lfloor\frac{\F(S)}{\m(S)}\right\rfloor\m(S)\leq x \leq \F(S)\}.$ It is clear that the map $\gamma:\CL\backslash \{\N\}\to \scrP(\N)$ is an $\ii$-pertinent map.

\begin{proposition}\label{proposition24}
	Let $S$ be a numerical semigroup such that $S\neq \{0,\m(S),\rightarrow\}$ and $T=S\cup \gamma(S).$ The following properties are satisfied.
	\begin{enumerate}[(1)]
		\item $T$ is a numerical semigroup.
		\item $S\backslash \{0\}$ is an ideal of $T.$
		\item $\m(T)=\m(S).$
		\item $\C(T)=\C(S)-1.$
	\end{enumerate}
	
\end{proposition}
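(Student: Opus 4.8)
The plan is to prove the four items in order, leaning on the tools already in place. Throughout, write $m=\m(S)$, $F=\F(S)$ and $q=\lfloor F/m\rfloor$, so that $\gamma(S)=\{x\in\N\setminus S\mid qm\le x\le F\}$ and, by Corollary~\ref{corollary22}, $\C(S)=q+1$. Since $S$ is not ordinary, $\C(S)\ge 2$ by Proposition~\ref{proposition18} (recall that $\C(S)=0$ only for $\N$ and $\C(S)=1$ only for ordinary semigroups distinct from $\N$), hence $q\ge 1$; this will be used repeatedly. For items (1) and (2) there is essentially nothing to do: it has already been observed that $\gamma$ is an $\ii$-pertinent map, i.e. $\gamma(S)$ is a nonempty $\ii(S)$-pertinent set, so Corollary~\ref{corollary9} (equivalently Theorem~\ref{theorem3} together with Proposition~\ref{proposition8}) gives at once that $T=S\cup\gamma(S)$ is an ideal extension of $S$, which by definition means that $T$ is a numerical semigroup and $S\setminus\{0\}$ is an ideal of it. (If one wants this self-contained: $x\ge qm$ and $s\ge m$ force $x+s\ge(q+1)m>F$, so $\gamma(S)\subseteq\PF(S)$; and for $a,b\in\gamma(S)$ one has $a+b\ge 2qm\ge(q+1)m>F$ since $q\ge 1$, so $a+b\in S$, making the closure clause of ``$\ii(S)$-pertinent'' vacuous.)

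Item (3) is short: $S\subseteq T$ gives $\m(T)\le m$, while $T\setminus\{0\}=(S\setminus\{0\})\cup\gamma(S)$, where all elements of $S\setminus\{0\}$ are at least $m$ and all elements of $\gamma(S)$ are at least $qm\ge m$; hence $\m(T)=m=\m(S)$.

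Item (4) is the heart of the matter, and is where the only genuine computation sits. First pin down the gaps of $T$: since no gap of $S$ exceeds $F$, one gets $\N\setminus T=(\N\setminus S)\setminus\gamma(S)=\{x\in\N\setminus S\mid x<qm\}$, which already yields $\F(T)<qm$. For the matching lower bound I claim $S$ has a gap in the interval $[(q-1)m,qm)$: otherwise the $m$ consecutive integers $(q-1)m,(q-1)m+1,\dots,qm-1$ all lie in $S$, and since $m\in S$ this forces $\{(q-1)m,\rightarrow\}\subseteq S$, whence $F\le(q-1)m-1$ and $\lfloor F/m\rfloor\le q-2$, contradicting $\lfloor F/m\rfloor=q$. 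Choosing such a gap $g$ and noting that $(q-1)m$, being a multiple of $m\in S$, is not a gap, we obtain $(q-1)m<g<qm$ and $g\in\N\setminus T$, so $(q-1)m<\F(T)<qm$, i.e. $\lfloor\F(T)/m\rfloor=q-1$. In particular $\N\setminus T\ne\emptyset$, so $T\ne\N$ and Corollary~\ref{corollary22} applies to $T$; using item (3), $\C(T)=\lfloor\F(T)/\m(T)\rfloor+1=q=\C(S)-1$.

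The \emph{main obstacle} is the lower bound in item (4): one must locate a gap of $S$ in exactly the window $[(q-1)m,qm)$, so that $\lfloor\F(T)/m\rfloor$ comes out as $q-1$ and not something smaller. The hypothesis that $S$ is not ordinary (equivalently $q\ge 1$) is precisely what rules out the degenerate cases, and the elementary observation that $m$ consecutive integers together with $m$ generate a final segment $\{a,\rightarrow\}$ is the key lemma. Everything else is bookkeeping over Corollary~\ref{corollary9} and Corollary~\ref{corollary22}.
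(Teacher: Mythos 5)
Your proof follows the paper's own route step for step: items (1)--(2) are obtained from $\gamma$ being an $\ii$-pertinent map together with Corollary~\ref{corollary9}, item (3) from $\gamma(S)\subseteq\{\m(S),\rightarrow\}$, and item (4) by bounding $\F(T)$ between $(q-1)m$ and $qm$ and then applying $\m(T)=\m(S)$ with Corollary~\ref{corollary22}. The only place you go beyond the paper is the lower bound $(q-1)m<\F(T)$, which the paper labels ``it is clear'' but which you justify correctly by exhibiting a gap of $S$ strictly between $(q-1)m$ and $qm$ via the pigeonhole-type observation that $m$ consecutive elements of $S$ plus $m\in S$ would force $\{(q-1)m,\rightarrow\}\subseteq S$ and contradict $\lfloor\F(S)/\m(S)\rfloor=q$.
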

\begin{proof}
	(1) and (2). Since $\gamma$ is an $\ii$-pertinent map, $\gamma(S)$ is a noempty $\ii$-pertinent set. Therefore, $T=S\cup \gamma(S)$ is a numerical semigroup and $S\backslash \{0\}$ is an ideal of $T.$

	(3). Since $S\neq \{0,\m(S), \rightarrow\},$ then $\F(S)>\m(S)$ and so $\gamma(S)\subseteq \{\m(S),\rightarrow\}.$ Hence $\m(T)=\m(S).$
	
	(4). It is clear that $\left(\left\lfloor\frac{\F(S)}{\m(S)}\right\rfloor-1 \right)\m(S)<\F(T)< \left\lfloor\frac{\F(S)}{\m(S)}\right\rfloor\m(S).$ By applying that $\m(T)=\m(S)$, Corollary \ref{corollary22} and Theorem \ref{theorem21}, we have  $\C(T)=\C(S)-1.$	
	
\end{proof}

\begin{corollary}\label{corollary25}
	If $S$ is a numerical semigroup, then $\C(S)=\mu(\gamma,S).$
	
\end{corollary}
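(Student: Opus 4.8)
The plan is to iterate Proposition~\ref{proposition24} and use the formula for the complexity established in Corollary~\ref{corollary22} to count the number of steps. Recall that $\mu(\gamma,S)$ is, by definition, the length of the chain $S=S_0\subsetneq S_1\subsetneq\dots\subsetneq S_{\mu(\gamma,S)}=\N$ obtained by repeatedly applying $\gamma$ (via Theorem~\ref{theorem14}), so the claim $\C(S)=\mu(\gamma,S)$ asserts that this particular $\ii$-chain is of minimal length among all $\ii$-chains connecting $S$ with $\N$ (cf. Proposition~\ref{proposition16}).

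First I would note that by Corollary~\ref{corollary17} we always have $\C(S)\leq\mu(\gamma,S)$, so only the reverse inequality needs work; equivalently, it suffices to show that the $\gamma$-chain decreases the complexity by exactly one at each step until $\N$ is reached. I would argue by induction on $\C(S)$. If $\C(S)=0$ then $S=\N$ and $\mu(\gamma,S)=0$; if $\C(S)=1$ then $S$ is ordinary, $S\neq\N$ (Proposition~\ref{proposition18}), and the one-step chain $S\subsetneq\N$ is exactly the $\gamma$-chain, so $\mu(\gamma,S)=1$. For the inductive step, assume the result for all numerical semigroups of complexity at most $k-1$ and let $\C(S)=k\geq 2$. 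Then $S\neq\{0,\m(S),\rightarrow\}$ (an ordinary semigroup has complexity $0$ or $1$), so Proposition~\ref{proposition24} applies to $T=S\cup\gamma(S)$: we get that $T$ is a numerical semigroup with $S\setminus\{0\}$ an ideal of $T$, $\m(T)=\m(S)$, and $\C(T)=\C(S)-1=k-1$. By the induction hypothesis, $\C(T)=\mu(\gamma,T)$, i.e. the $\gamma$-chain starting at $T$ has length $k-1$. But the $\gamma$-chain starting at $S$ is precisely $S$ followed by the $\gamma$-chain starting at $S_1=S\cup\gamma(S)=T$ (since $S_1=S\cup\theta(S)$ with $\theta=\gamma$), so $\mu(\gamma,S)=1+\mu(\gamma,T)=1+(k-1)=k=\C(S)$, which closes the induction.

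The main obstacle — though it is already discharged by Proposition~\ref{proposition24} — is verifying that a single application of $\gamma$ lowers the complexity by exactly one rather than by more; this is the content of part~(4) of that proposition, which hinges on the fact that removing the block $[\lfloor\F(S)/\m(S)\rfloor\,\m(S),\F(S)]$ of gaps from $\N\setminus S$ leaves a semigroup $T$ whose Frobenius number lies strictly between $(\lfloor\F(S)/\m(S)\rfloor-1)\m(S)$ and $\lfloor\F(S)/\m(S)\rfloor\,\m(S)$, together with $\m(T)=\m(S)$, so that Theorem~\ref{theorem21} (or Corollary~\ref{corollary22}) gives $\C(T)=\C(S)-1$. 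Everything else is bookkeeping: one only needs to be careful that the base cases $\C(S)\in\{0,1\}$ are handled separately because Proposition~\ref{proposition24} is stated under the hypothesis $S\neq\{0,\m(S),\rightarrow\}$, and for ordinary $S\neq\N$ the $\gamma$-chain is the trivial one-step chain.
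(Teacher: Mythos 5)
Your proof is correct and follows the same route the paper intends: the result is an immediate consequence of Proposition~\ref{proposition24}(4) iterated, together with Corollary~\ref{corollary17} for one direction, and your induction on $\C(S)$ with the base cases handled separately is exactly the bookkeeping the paper leaves implicit by labeling the statement a corollary.
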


We illustrate the content of previous corollary with an example.

\begin{example}\label{example25}
	Let $S=\langle 5,7\rangle=\{0,5,7,10,12,14,15,17,19,20,21,22,24,25,\rightarrow\}.$ By Example \ref{example23}, we know that $\C(S)=5.$ Associated to the numerical semigroup $S$ and to the $\ii$-pertinent map (see Theorem \ref*{theorem14}) we have the $\ii$-chain $S=S_0\subsetneq S_1\subsetneq \dots \subsetneq S_{\mu(\gamma, S)}=\N,$ where $S_{n+1}=S_n\cup \gamma(S_n)$ for all $n\in \{0,\dots, \mu(\gamma,S)-1\}.$ Specifically, we have the chain
	\begin{multline*}
	S=S_0\subsetneq S_1=S\cup \{23\}\subsetneq S_2=S_1\cup \{16,18\}\subsetneq S_3=\\
	S_2\cup \{11,13\}\subsetneq S_4=S_3
	\cup \{6,8,9\}\subsetneq S_5=\N.
	\end{multline*}
	Corollary \ref{corollary25} tells us that this $\ii$-chain connects $S$ and $\N$ and it has minimum length.
{
This chain is also obtained with the function {\tt chainOfGamma} of \href{https://github.com/D-marina/CommutativeMonoids/blob/master/ComplexityOfNS/complexityOfNS.ipynb}{complexityOfNS.ipynb}:
\begin{verbatim}
gap> lGamma:=chainGamma(NumericalSemigroup(5,7));;
gap> List(lGamma,x->MinimalGeneratingSystemOfNumericalSemigroup(x));
\end{verbatim}
The output is the following list of minimal system of generators:
\begin{verbatim}
[ [ 5, 7, 23 ], [ 5, 7, 16, 18 ], [ 5, 7, 11, 13 ],
[ 5, 6, 7, 8, 9 ], [ 1 ] ]
\end{verbatim}
}
\end{example}

We end this section with some questions concerning the set of pseudo-Frobenius numbers.

\begin{remark}
    By Proposition \ref{proposition13}, the map $\theta:\CL\backslash \{\N\}\to \scrP(\N)$ defined by $\theta(S)=\PF(S)$ is an $\ii$-pertinent map.
    Since $\theta(S)$ is always the largest set among all the possible ones, we wonder if it is true that $\C(S)=\mu(\theta,S)$. Surprisingly, the answer is not. The numerical semigroup with the  smallest Frobenius number not satisfying that property is $S=\langle 4,6,9,11\rangle$ and the $\ii$-chain obtained is \begin{multline*}
        S=S_0\subsetneq S_1=\langle 2,5 \rangle=S\cup\{2,5,7\}\subsetneq S_2=\langle 2,3\rangle=S_1\cup\{3\}\subsetneq S_3=\N.
    \end{multline*} Note that $\C(S)=\lfloor \frac 7 4 \rfloor+1=2$, but the length of the $\ii$-chain is $3$.
    The next one is $\langle 5,7,9,11,13\rangle $ which has Frobenius number $8$. As we increase the Frobenius number, we obtain more examples. In addition to the obtained examples we can obtain new ones like for example  $\langle 4,6,9\rangle$ from our first example and  $\langle 5,7\rangle$ from our second example.

    If we examine the numerical semigroups verifying that $\C(S)=\mu(\theta,S)$, we see that the vast majority verify this property. Since apparently their proportion is smaller, an interesting question is to give a characterization of the family of numerical semigroups for which this property is not verified.

{
An example of a family that satisfies that adding $\PF(S)$ yields a chain that achieves complexity is that formed by the numerical semigroups of the form
$$S_k=\{0,m,2m,\dots,km\} \cup \{km+1,\to\}$$ with $k,m\in\N\backslash\{0\}$.
For this it is sufficient to take into account that $\PF(S_k)=\{(k-1)m+1,\dots,km-1\}$ and that $\lfloor \frac {km-1}{m} \rfloor +1=(k-1)+1=k$.
The $\ii$-chain obtained is $S_k\subsetneq S_{k-1}\subsetneq \dots \subsetneq S_1=\{0,m,\to\}\subsetneq \N$.
}

{
Consider now the numerical semigroups of the form $\{0,4,6,2\cdot 4, 3\cdot 4,\dots, k\cdot 4\}\cup\{4k+1,\to\}$ with $k\geq 2$. In the $\ii$-chain obtained with $\theta(S)=\PF(S)$, we find the semigroup $\langle 4,6,9,11\rangle=\{0,4,6,8,\to\}$. Hence, the length of this chain is larger than their complexity.
}
{
This can be generalized as follows.
Let $S=S_0$ be the numerical semigroup equal to $\{0,2k,3k,4k,\to\}$. The set $\PF(S)$ is $\{k,2k+1,\dots,4k-1\}$ and therefore $S_1=S\cup \PF(S)=\{0,k,2k,\to\}$. The set of pseudo-Frobenius numbers of $S_1$ is $\{k+1,\dots,2k-1\}$ and so $S_2=S_1\cup \PF(S_1)=\{0,5,\to\}$ which is an ordinary semigroup and the length of the $\ii$-chain obtained is $3$, but the complexity of $S$ is $\lfloor \frac {\F(S)}{m(S)}\rfloor+1=\lfloor \frac{4k-1}{2k} \rfloor+1=2$.
}
\end{remark}

\section{Numerical semigroups with a fixed multiplicity and complexity}

If $m\in \N\backslash \{0,1\}$, then we denote by
$$\CL_m=\{S\mid S \mbox{ is a numerical semigroup and } \m(S)=m\}.$$
For $S\in  \CL_m$ define the following sequence:

\begin{itemize}
	\item $S_0=S,$
	\item $S_{n+1}=\left\{\begin{array}{lr}
		S_n \cup \gamma(S_n), &  \mbox{if }  S_n\neq \{0,m,\rightarrow\}\\
		\{0,m,\rightarrow\}, & \mbox{otherwise.}
	\end{array}
	\right.$
\end{itemize}

The following result is obtained from Proposition \ref{proposition24} and Corollary \ref{corollary25}.
\begin{corollary}\label{corollary27}
	Let $m\in \N \backslash \{0,1\},$ $S\in \CL_m$ and let $\{S_n\}_{n\in \N}$ be the sequence defined above. Then,  $S=S_0\subsetneq S_1\subsetneq \dots \subsetneq S_{\C(S)-1}=\{0,m,\rightarrow\}.$
	In addition, we have the following:
	\begin{enumerate}
		\item $S_i\in \CL_m$ for all $i\in \{0,1,\dots,\C(S)-1\}.$
		\item $S_i\backslash \{0\}$ is an ideal of $S_{i+1}$  for all $i\in \{0,1,\dots,\C(S)-2\}.$
		\item $\C(S_i)=\C(S)-i$  for all $i\in \{0,1,\dots,\C(S)-1\}.$
	\end{enumerate}
	
\end{corollary}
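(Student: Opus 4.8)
The plan is to prove the whole statement by a single induction on the index $i$, running from $i=0$ up to $i=\C(S)-1$, in which I track the two quantities $\m(S_i)$ and $\C(S_i)$ \emph{simultaneously}: Proposition \ref{proposition24} will serve as the one-step engine, and Proposition \ref{proposition18} will be used both to recognise that the intermediate terms are not yet ordinary and to identify the last term as $\{0,m,\rightarrow\}$. First I would dispose of the trivial case $\C(S)=1$: here Proposition \ref{proposition18} forces $S=\{0,m,\rightarrow\}$, so the asserted chain is the single semigroup $S_0=S$, properties (1) and (3) are immediate, and (2) is vacuous. From now on assume $\C(S)\geq 2$.

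For the induction I would carry as hypothesis at stage $i$ the pair of assertions $S_i\in\CL_m$ and $\C(S_i)=\C(S)-i$; the base case $i=0$ holds since $\m(S_0)=\m(S)=m$ and $\C(S_0)=\C(S)$. For the inductive step, fix $i$ with $0\leq i<\C(S)-1$ and assume $\m(S_i)=m$ and $\C(S_i)=\C(S)-i$. Then $\C(S_i)=\C(S)-i\geq 2$, so by Proposition \ref{proposition18} the semigroup $S_i$ is not ordinary; in particular $S_i\neq\{0,m,\rightarrow\}=\{0,\m(S_i),\rightarrow\}$, so the recursion defining $\{S_n\}$ takes its first branch and $S_{i+1}=S_i\cup\gamma(S_i)$. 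Applying Proposition \ref{proposition24} to $S_i$ then yields, from its four parts in turn: $S_{i+1}$ is a numerical semigroup; $S_i\setminus\{0\}$ is an ideal of $S_{i+1}$ (property (2)); $\m(S_{i+1})=\m(S_i)=m$, so $S_{i+1}\in\CL_m$ (property (1)); and $\C(S_{i+1})=\C(S_i)-1=\C(S)-(i+1)$ (property (3)). Since $\gamma$ is an $\ii$-pertinent map, $\gamma(S_i)$ is a nonempty subset of $\N\setminus S_i$, so the inclusion $S_i\subsetneq S_{i+1}$ is strict. This closes the induction and delivers at once the strict chain $S=S_0\subsetneq S_1\subsetneq\dots\subsetneq S_{\C(S)-1}$ together with (1), (2), (3) on the stated index ranges.

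It then remains only to identify the terminal term: by (3) we have $\C(S_{\C(S)-1})=1$, and by (1) we have $\m(S_{\C(S)-1})=m$, so Proposition \ref{proposition18} forces $S_{\C(S)-1}=\{0,m,\rightarrow\}$, exactly as claimed; this is also consistent with Corollary \ref{corollary25}, since prolonging the chain by the single further step $\{0,m,\rightarrow\}\subsetneq\N$ produces the length-$\C(S)$ $\ii$-chain realising $\mu(\gamma,S)=\C(S)$. The one point that genuinely needs care — and the reason the induction must carry multiplicity and complexity together rather than separately — is the bookkeeping that $\m(S_i)=m$ at \emph{every} stage: this is precisely what entitles us to invoke Proposition \ref{proposition24} at each step (its hypothesis $S_i\neq\{0,\m(S_i),\rightarrow\}$ being phrased through $\m(S_i)$), and it is what pins down the last semigroup. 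No step presents a serious obstacle beyond this intertwined accounting.
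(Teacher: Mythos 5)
Your proof is correct and takes essentially the same approach the paper intends: the paper offers no proof beyond citing Proposition \ref{proposition24} and Corollary \ref{corollary25}, and your induction — carrying $\m(S_i)=m$ and $\C(S_i)=\C(S)-i$ together, invoking Proposition \ref{proposition24} for the inductive step and Proposition \ref{proposition18} to rule out the ordinary case at intermediate stages and to identify the terminal term as $\{0,m,\rightarrow\}$ — is precisely the argument those citations are gesturing at.
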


A {\it graph} $G$ is a pair $(V,E)$, where $V$ is a nonempty set and $E$ is a subset of $\{(u,v)\in V \times V \mid u\neq v\}.$ The elements of $V$ and $E$ are called {\it vertices} and {\it edges} of $G$, respectively. A {\it path, of
	length $n$,} connecting the vertices $u$ and $v$ of $G$ is a
sequence of different edges of the form $(v_0,v_1),
(v_1,v_2),\ldots,(v_{n-1},v_n)$ such that $v_0=u$ and $v_n=v$.

We say that a graph $G$ is {\it a tree} if there exists a vertex $r$ (known as
{\it the root} of $G$) such that for any other vertex $v$ of $G$
there exists a unique path connecting $v$ and $r$. If $(u,v)$ is an edge of the tree $G$, then   we say that $u$ is a {\it child} of $v$.

If $m\in \{0,1\},$ then we define the graph $G(m)$ as follows: $\CL_m$ is its set of vertices and $(S,T)\in \CL_m\times \CL_m$ is an edge if $T=S\cup \gamma(S).$

As a consequence of Corollary \ref{corollary27}, we have the following result.
\begin{corollary}\label{corollary28}
	If $m\in \N\backslash \{0,1\},$ then $G(m)$ is a tree with root $\{0,m,\rightarrow\}.$ Moreover, $\{S\in \CL_m \mid \C(S)=k\}=\{S\in \CL_m \mid S \mbox{ is connected with }\{0,m,\rightarrow\} \mbox{through a path of length }k-1\}.$
	\end{corollary}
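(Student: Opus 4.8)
The plan is to prove the two assertions of Corollary \ref{corollary28} in turn, both as direct consequences of Corollary \ref{corollary27}.

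\textbf{Step 1: $G(m)$ is a tree with root $\{0,m,\rightarrow\}$.} Fix the candidate root $r=\{0,m,\rightarrow\}$. I would first observe that $r\in \CL_m$ and that $r$ has no outgoing edge in the sense that the construction in Corollary \ref{corollary27} stops precisely at $r$; every other vertex $S\in \CL_m$ with $S\neq r$ has exactly one successor, namely $S\cup\gamma(S)$, which by Proposition \ref{proposition24}(3) again lies in $\CL_m$. Thus from any vertex $S$ the sequence $S=S_0\subsetneq S_1\subsetneq\cdots$ is forced and, by Corollary \ref{corollary27}, terminates at $S_{\C(S)-1}=r$ after finitely many steps. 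This exhibits, for each $S$, a path $(S_0,S_1),(S_1,S_2),\dots,(S_{\C(S)-2},S_{\C(S)-1})$ from $S$ to $r$ in $G(m)$. For uniqueness of the path connecting $S$ to $r$: since each vertex $\neq r$ has a unique successor (the edge relation, read ``forward'', is a function), and $\C$ strictly decreases along any forward step (Proposition \ref{proposition24}(4)), any path from $S$ to $r$ must be a sequence of forward steps and hence coincides with the canonical one. Here one should also note that all edges of $G(m)$ point ``towards'' $r$: if $(S,T)$ is an edge then $T=S\cup\gamma(S)$, so $S$ is the unique predecessor type; combining this with the fact that $\C(T)<\C(S)$ rules out cycles and forces every path to $r$ to be strictly $\C$-decreasing. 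This gives that $G(m)$ is a tree rooted at $r$.

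\textbf{Step 2: the level-set description.} Once $G(m)$ is known to be a tree rooted at $r=\{0,m,\rightarrow\}$, the unique path from $S$ to $r$ has length exactly $\C(S)-1$ by Corollary \ref{corollary27} (the chain $S=S_0\subsetneq\cdots\subsetneq S_{\C(S)-1}=r$ has $\C(S)-1$ edges), and conversely any $S$ at distance $k-1$ from $r$ has $\C(S)-1=k-1$, i.e. $\C(S)=k$. Hence $\{S\in\CL_m\mid \C(S)=k\}$ equals the set of vertices at distance $k-1$ from the root, which is the claimed equality.

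\textbf{Anticipated main obstacle.} The only genuinely delicate point is the uniqueness of paths in Step 1, i.e. verifying that $G(m)$ really is a tree and not merely a connected graph with a distinguished vertex to which everything has \emph{some} path. The key fact making this work is that the forward edge relation is functional (each $S\neq r$ has the single successor $S\cup\gamma(S)$) and strictly decreases the complexity, so no vertex can be reached from $r$ going ``forward'' and there are no alternative routes; I would make sure to state clearly that edges are directed from $S$ to $S\cup\gamma(S)$ and that $\C$ is a strictly decreasing potential along edges, which together preclude both cycles and branching on the path to the root. Everything else is a bookkeeping translation of Corollary \ref{corollary27}.
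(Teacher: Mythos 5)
Your proof is correct and follows exactly the route the paper intends: the paper states this corollary without further argument as ``a consequence of Corollary \ref{corollary27},'' and your write-up simply fills in the details. You correctly identify the one point worth being careful about, namely that under the paper's convention a ``path'' is a sequence of directed edges, so uniqueness of the path from $S$ to $r=\{0,m,\rightarrow\}$ follows immediately from the fact that every $S\neq r$ in $\CL_m$ has exactly one outgoing edge (to $S\cup\gamma(S)$, which stays in $\CL_m$ by Proposition \ref{proposition24}(3)), together with the strict decrease of $\C$ along edges ruling out cycles. The length count $\C(S)-1$ then drops straight out of the chain in Corollary \ref{corollary27}. One cosmetic remark: it is worth saying explicitly that the root has no outgoing edge in $G(m)$ because $r\cup\gamma(r)=\N\notin\CL_m$, which is what makes the sequence actually stop at $r$ inside the graph; you gesture at this but do not state it.
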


It is evident that a tree can be constructed recursively by starting from the root and connecting with an edge the vertices already constructed with their children. We are interested in characterizing how are the children of an arbitrary vertex of $\G(m)$.

\begin{proposition}\label{proposition29}
		Let $m\in \N\backslash \{0,1\}$ and $T\in \CL_m.$ Then, the set formed by all the children of $T$ in the tree $G(m)$ is $$\left\{T\backslash A \mid \emptyset \neq A \subseteq \left\{x\in \msg(T)\mid x>\left(\left\lfloor\frac{\F(T)}{\m(T)}\right\rfloor+1 \right)m\right\}\right\}.$$
	
\end{proposition}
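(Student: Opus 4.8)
The plan is to characterize the children of a vertex $T$ in $G(m)$ by reversing the operation $S\mapsto S\cup\gamma(S)$. By definition of $G(m)$, a numerical semigroup $S\in\CL_m$ is a child of $T$ precisely when $S\neq T$, $\m(S)=m$, and $T=S\cup\gamma(S)$; by Proposition \ref{proposition24} the last equality forces $\C(S)=\C(T)+1$, and conversely any such $S$ sits one level below $T$. So the first step is to understand, given $T$, which sets $A$ have the property that $S:=T\setminus A$ is a numerical semigroup with $\m(S)=m$ and $S\cup\gamma(S)=T$.

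First I would analyze what $\gamma(S)$ must be. If $S=T\setminus A$ is a child, then $\C(S)=\C(T)+1$, so by Corollary \ref{corollary22} we have $\left\lfloor\frac{\F(S)}{\m(S)}\right\rfloor=\left\lfloor\frac{\F(T)}{\m(T)}\right\rfloor+1$; writing $q=\left\lfloor\frac{\F(T)}{m}\right\rfloor$, this says $\gamma(S)=\{x\in\N\setminus S\mid (q+1)m\le x\le\F(S)\}$, and since $S\cup\gamma(S)=T$ we get $A=\gamma(S)$ and $A\subseteq\{(q+1)m,\rightarrow\}$, with $\max A=\F(S)$. The key structural observation is then: because every element of $A$ is at least $(q+1)m>m$ and is removed from $T$ while $T\setminus A$ remains a semigroup, each such element $x$ must be removable, i.e.\ $x$ cannot be written as a sum of two nonzero elements of $T\setminus A$. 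The cleanest way to pin this down is to show $A$ must consist of \emph{minimal generators} of $T$: if $x\in A$ were decomposable in $T$ as $x=a+b$ with $a,b\in T\setminus\{0\}$, then since $a,b<x$ and (as I will check) $a,b\notin A$, we would have $x\in\langle T\setminus A\rangle\setminus(T\setminus A)$, contradicting that $T\setminus A$ is a semigroup. Conversely, if $A\subseteq\{x\in\msg(T)\mid x>(q+1)m\}$ is nonempty, I would verify directly that $T\setminus A$ is closed under addition (a sum of two elements of $T\setminus A$ lies in $T$; it cannot equal any $x\in A$ since the minimal generators $>(q+1)m$ are not removed by the presence of smaller elements — one must check the two summands cannot both be the removed generators, using that $2\cdot(q+1)m>\F(T)+m$ so such a sum already lies in $\{0,m,\rightarrow\}\cap T$ region, hence is not a generator), that $\m(T\setminus A)=m$ (nothing below $(q+1)m$, in particular not $m$, is removed), and that $\gamma(T\setminus A)=A$ so that $(T\setminus A)\cup\gamma(T\setminus A)=T$.

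The two halves together give the stated equality of sets. I expect the main obstacle to be the bookkeeping in the converse direction: one must verify simultaneously that removing $A$ keeps closure under addition, keeps the multiplicity equal to $m$, keeps $S$ a genuine numerical semigroup (cofinite), and — most delicately — that $\gamma(S)$ recovers exactly $A$ and not a larger or smaller set. The crux is controlling $\F(S)=\max A$ versus $\left\lfloor\frac{\F(S)}{m}\right\rfloor m$: one needs that every integer in the interval $[\,\left\lfloor\frac{\F(S)}{m}\right\rfloor m,\ \F(S)\,]$ that is missing from $S$ is in $A$, and that $\left\lfloor\frac{\F(S)}{m}\right\rfloor=q+1$. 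Since all removed elements exceed $(q+1)m$ and all integers $>\F(T)$ already lie in $T\cap\{\,\F(T)+1,\rightarrow\,\}\subseteq S$, the "gap'' of $S$ above $(q+1)m$ is exactly $A$, which forces $\left\lfloor\frac{\F(S)}{m}\right\rfloor m=(q+1)m$ and hence $\gamma(S)=A$; making this chain of inequalities airtight, using Proposition \ref{proposition6} / the $\ii(T)$-pertinence already available, is where the real work sits, but it is routine once the minimal-generator characterization is in place.
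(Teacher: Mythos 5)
Your proposal matches the paper's proof in both direction and substance: for necessity you deduce, as the paper does, that $A=\gamma(S)$ sits in the narrow interval above $(q+1)m$ (with $q=\lfloor\F(T)/m\rfloor$) and must consist of minimal generators of $T$; for sufficiency you use the same bound on $\msg(T)$ to confine $A$ to $((q+1)m,(q+2)m)$ and check that $T\setminus A$ is a numerical semigroup of multiplicity $m$ with $\gamma(T\setminus A)=A$. The only blemish is a muddled aside in the sufficiency step ("the two summands cannot both be the removed generators''): since the summands lie in $T\setminus A$ they are automatically not in $A$, and closure follows simply because a sum of two nonzero elements of $T$ cannot be a minimal generator of $T$, hence cannot lie in $A\subseteq\msg(T)$.
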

\begin{proof}
	If $S$ is a child of $T$ in the tree $G(m),$ then $T=S\cup \gamma(S)$ and so $S=T\backslash \gamma(S).$ It is clear that $S$ is a numerical semigroup with complexity $\C(T)+1.$ Hence $\emptyset\neq \gamma(S)\subseteq \left\{x\in \msg(T)\mid x> \left( \left\lfloor\frac{\F(T)}{\m(T)}\right\rfloor+1\right)m\right\}.$
	
	Conversely, if $\emptyset\neq A\subseteq \{x\in \msg(T)\mid x> \left( \left\lfloor\frac{\F(T)}{\m(T)}\right\rfloor+1\right)m\},$ then $A\subseteq \left\{ \left(
	 \left\lfloor\frac{\F(T)}{\m(T)}\right\rfloor+1\right)m+1, \dots, \left(\left\lfloor\frac{\F(T)}{\m(T)}\right\rfloor+2\right)m-1
	\right\}$, since if $x\in \msg(T)$ then $x\leq \F(T)+m<\left( \left\lfloor\frac{\F(T)}{\m(T)}\right\rfloor+1\right)m+m=\left( \left\lfloor\frac{\F(T)}{m}\right\rfloor+2\right)m.$ Thus, $S=T\backslash A$ is a numerical semigroup and $\gamma(S)=A.$ Consequently, $S\in \CL_m$ and $S\cup \gamma(S)=T.$ Therefore, $S$ is a child of $T.$
\end{proof}	
\begin{example}\label{example30}
    Next we can see the tree $G(2)$.\\
\begin{center}
{\footnotesize
    \xymatrixcolsep{0mm}
    \xymatrix{
        & & & & &  &&& \langle 2,3 \rangle  & [2]   & & &
        \\
        \\
        & & & & &   &&& \langle 2,5 \rangle \ar[uu]_{\{3\}} & [4] &  &  &
        \\
        \\
        & & & & &    &&&\langle 2,7 \rangle \ar[uu]_{\{5\}} & [6] &  &  &
        \\
        \\
            & & & & &   &&& \langle 2,9 \rangle  \ar[uu]_{\{7\}}\ar@{.}[d] & [8] &  &  &
            \\
            & & & & &   &&&  &  &  &  &
           }
}
\end{center}
    The number $[k]$ indicates $\left(\left \lfloor \frac {\F(T)}  {2}\right \rfloor+1\right)2$ and $\begin{array}{l}P\\    \Big \uparrow {\{x\}}\\ Q \end{array}$ means that $Q=P\backslash \{x\}.$
    We see that the vertices of $G(2)$ are of the form $\langle 2,2k+1\rangle $ where $k$ is the complexity of the semigroup.
\end{example}

\begin{example}\label{example31}
We now use Proposition \ref{proposition29} to obtain the tree $G(3)$.\\
{\footnotesize \xymatrix@C=0.5em{
    & & & & & & & \langle 3,4,5\rangle  & & & & &  &  & \,[3]  \\
    && & & \langle 3,5,7 \rangle \ar[urrr]^{\{4\}} & & & \langle 3,4\rangle \ar[u]^{\{5\}} & & \langle 3,7,8\rangle \ar[ull]_{\{4,5\}} & & & & & \,[6]\\
    && & \langle 3,5 \rangle \ar[ur]^{\{7\}}& & & & \langle 3,8,10\rangle \ar[urr]^{\{7\}} &&\langle 3,7,11\rangle \ar[u]^{\{8\}} & & \langle 3,10,11\rangle \ar[ull]_{\{7,8\}} & & & \,[9]\\
    & & & & \langle 3,8,13 \rangle \ar[urrr]_{\{10\}}\ar@{.}[d] &  &&\langle 3,7 \rangle \ar[urr]_{\{11\}} && \langle 3,11,13\rangle \ar[urr]_{\{10\}}\ar@{.}[d] && \langle 3,10,14\rangle \ar[u]_{\{11\}}\ar@{.}[d]&& \langle 3,13,14\rangle \ar[ull]_{\{10,11\}}\ar@{.}[rd]\ar@{.}[d]\ar@{.}[ld]
    & \,[12]\\
    &&  &    &  & &   & &  & & & & & &}
}
\end{example}

Our next aim is to present an algorithm to compute all the numerical semigroups with a given multiplicity and complexity. For this reason we introduce some concepts and results.

If  $G=(V,E)$ is a tree and $v$ is a vertex  of $G$, then the {\it
	depth} of $v$, denoted by $\d(v)$, is the length  of the only
path connecting   $v$ with the root.
The following result is an immediate consequence of Corollary \ref{corollary28}.

\begin{proposition}\label{proposition32}
	If $m\in \N\backslash \{0,1\}$ and $S$ is a vertex of $G(m),$ then $G(S)=\d(S)+1$.
	
\end{proposition}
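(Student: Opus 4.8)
The claim is an almost immediate bookkeeping consequence of Corollary~\ref{corollary28} (note that the ``$G(S)$'' appearing in the statement should read ``$\C(S)$'', which is what is actually meant and what makes the result follow from Corollary~\ref{corollary28}). The plan is the following. First I would recall that, by Corollary~\ref{corollary28}, $G(m)$ is a tree with root $\{0,m,\rightarrow\}$; hence, by the definition of depth, $\d(S)$ is precisely the length of the unique path in $G(m)$ joining the vertex $S$ to the root $\{0,m,\rightarrow\}$. Since $S\in\CL_m$ and $m\geq 2$, we have $S\neq\N$, so $\C(S)$ is a well-defined positive integer.

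Next I would apply the second assertion of Corollary~\ref{corollary28} with $k=\C(S)$: it states that $S\in\CL_m$ satisfies $\C(S)=k$ if and only if $S$ is connected with $\{0,m,\rightarrow\}$ through a path of length $k-1$. Taking $k=\C(S)$, this yields a path in $G(m)$ of length $\C(S)-1$ joining $S$ to the root. Because $G(m)$ is a tree, there is only one such path, and it is therefore exactly the path realizing the depth of $S$. Consequently $\d(S)=\C(S)-1$, that is, $\C(S)=\d(S)+1$, as desired. As a sanity check one may observe that the root $\{0,m,\rightarrow\}$ has depth $0$ and, being ordinary and distinct from $\N$ (since $m\geq 2$), has complexity $1$ by Proposition~\ref{proposition18}, in agreement with the formula.

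I do not expect any real obstacle in this proof: the only point deserving an explicit word is that the path of length $\C(S)-1$ furnished by Corollary~\ref{corollary28} is the same as the (unique) depth-realizing path, and this is forced by the tree structure of $G(m)$. Everything else is a direct substitution into the characterization provided by Corollary~\ref{corollary28}.
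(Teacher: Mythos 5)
Your proof is correct and matches the paper's approach exactly: the paper states that Proposition~\ref{proposition32} is an immediate consequence of Corollary~\ref{corollary28}, which is precisely the reasoning you carry out. You are also right that ``$G(S)$'' in the statement is a typo for ``$\C(S)$''.
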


If $G=(V,E)$ is a tree, then  we denote
by $\NN(G,n)=\{x\in V\mid d(x)=n\}.$

\begin{example}\label{example33} From Example \ref{example31}, we easily deduce $\NN(G(3),2)=\{\langle 3,5 \rangle, \langle 3,8,10 \rangle,\\ \langle 3,7,11 \rangle, \langle 3,10,11 \rangle\}.$ Therefore, by applying Proposition \ref{proposition32}, we have $\{S\in \CL_3\mid \C(S)=3\}=\{\langle 3,5 \rangle, \langle 3,8,10 \rangle, \langle 3,7,11 \rangle, \langle 3,10,11 \rangle\}.$
	
\end{example}
The proof of the following result is straightforward.
\begin{proposition}\label{proposition34}
If $G=(V,E)$ is a tree and $r$ is its root, then $\NN(G,0)=\{r\}$ and $\NN(G,n+1)=\{v\in V\mid v \mbox{ is a child of a vertex from }\NN(G,n)\}$ for all $n\in \N.$
\end{proposition}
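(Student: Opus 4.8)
\emph{Proof idea.} This is a purely graph-theoretic statement about the abstract tree $G$, and the plan is to prove the two equalities separately, in each case by unwinding the definitions of the unique path joining a vertex to the root $r$ and of the depth $\d$. No earlier result is needed.

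For $\NN(G,0)=\{r\}$, I would use that $\d(r)=0$, the only path joining $r$ with itself being the empty one, whereas for $v\neq r$ every path joining $v$ with $r$ is a sequence $(v_0,v_1),\dots,(v_{k-1},v_k)$ with $v_0=v\neq r=v_k$, and hence has length $k\geq 1$; therefore $\d(v)\geq 1$ for all $v\neq r$, so $r$ is the unique vertex of depth $0$.

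For the second equality, fix $n\in\N$ and argue by double inclusion. If $v\in\NN(G,n+1)$, then $\d(v)=n+1\geq 1$ forces $v\neq r$, so there is a unique path $(v_0,v_1),(v_1,v_2),\dots,(v_n,v_{n+1})$ with $v_0=v$ and $v_{n+1}=r$; its initial edge $(v_0,v_1)$ exhibits $v$ as a child of $v_1$, and its tail $(v_1,v_2),\dots,(v_n,v_{n+1})$ is again a path (a sublist of pairwise different edges) joining $v_1$ with $r$, hence by uniqueness it is \emph{the} path from $v_1$ to $r$, so $\d(v_1)=n$ and $v_1\in\NN(G,n)$; when $n=0$ the tail is empty and this just says $v_1=r$, consistently with $\d(r)=0$. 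Conversely, if $v$ is a child of some $u\in\NN(G,n)$, so $(v,u)\in E$ and $\d(u)=n$, I would prepend the edge $(v,u)$ to the unique path joining $u$ with $r$: this produces a path from $v$ to $r$ of length $n+1$, which by uniqueness is the path from $v$ to $r$, whence $\d(v)=n+1$ and $v\in\NN(G,n+1)$. Combining the two inclusions yields the recursion.

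The only point that needs care — and hence the (modest) main obstacle — is the bookkeeping with the definition of a path as a sequence of \emph{different} edges when truncating, respectively extending, the canonical path to $r$. For the truncation it is automatic. For the prepending one checks that the canonical path from $u$ to $r$ visits pairwise distinct vertices and that $v$ is not among them: both follow quickly from the uniqueness axiom, since a repeated vertex, or an occurrence of $v$ on that path, would produce a strictly shorter path from the corresponding vertex (respectively from $v$) to $r$. With this in hand the edge $(v,u)$ coincides with none of the others and the prepended sequence is a genuine path, so both inclusions are immediate.
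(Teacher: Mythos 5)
Your proof is correct, and since the paper simply declares the statement ``straightforward'' without supplying an argument, your careful unwinding of the definitions of path, depth, and child is exactly what is implicitly intended. The only content the paper leaves unsaid — truncating and prepending the canonical path to $r$ and verifying that the extended sequence really is a path of distinct edges — is precisely what you supply, and your handling of that point (distinctness of vertices on the canonical path, and $v$ not lying on the path from its parent $u$ to $r$, both via uniqueness) is sound.
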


The algorithm for the calculation of the above sets is as follows.

\begin{algorithm}[h]
	\BlankLine
	\KwData{An positive integer $c$ and $m\in \N\backslash \{0,1\}.$}
    \KwResult{$\{S\in \CL_m\mid \C(S)=c\}.$}
    \BlankLine
	$A:=\{\{0,m,\rightarrow\}\}$\; $i:=1$\;
	\If{$i=c$}{\Return $A$\;}\label{marker}
	Compute $B:=\{S\in \CL_m\mid S \mbox{ is a child of an element of }A\}$\;
	$A:=B$\;
	$i:=i+1$\;
	{\bf Goto} line 3\;
	\caption{Computation of the set of numerical semigroups with multiplicity $m$ and complexity $c$.}
\end{algorithm}

We proceed to illustrate how the
previous algorithm works with an example.
\begin{example}\label{example36}
    We proceed now to compute the set $\{S\in \CL_3\mid \C(S)=4\}$ using the previous algorithm.
	\begin{itemize}
		\item $A=\{\{0,3,\rightarrow\}\},$ $i=1.$
		\item $A=\{\langle 3,5,7\rangle,\langle 3,4\rangle, \langle 3,7,8\rangle\},$ $i=2.$
		\item $A=\{\langle 3,5\rangle,\langle 3,8,10\rangle, \langle 3,7,11\rangle, \langle 3,10,11\rangle\},$ $i=3.$
		\item $A=\{\langle 3,8,13\rangle,\langle 3,7\rangle, \langle 3,11,13\rangle, \langle 3,10,14\rangle, \langle 3,13,14\rangle\},$ $i=4.$
		\end{itemize}
	Thus, $\{S\in \CL_3\mid \C(S)=4\}=\{\langle 3,8,13\rangle,\langle 3,7\rangle, \langle 3,11,13\rangle, \langle 3,10,14\rangle, \langle 3,13,14\rangle\}.$
	
This list is obtained with the function {\tt NSWithMultiplicityAndComplexity} of \href{https://github.com/D-marina/CommutativeMonoids/blob/master/ComplexityOfNS/complexityOfNS.ipynb}{complexityOfNS.ipynb}:
\begin{verbatim}
gap> l34:=NSWithMultiplicityAndComplexity(3,4);;
gap> List(l34,x->MinimalGeneratingSystemOfNumericalSemigroup(x));
\end{verbatim}
The result obtained is:
\begin{verbatim}
[ [ 3, 8, 13 ], [ 3, 7 ], [ 3, 11, 13 ], [ 3, 10, 14 ],
[ 3, 13, 14 ] ]
\end{verbatim}
\end{example}
	
	
The following result easily follows.

 \begin{lemma}\label{lemmaA}
 If $S$ is a numerical semigroup, then $T=(\{\m(S)\}+S)\cup \{0\}$ is again a numerical semigroup. In addition, $\m(T)=\m(S)$, $\F(T)=\F(S)+\m(S)$ and $\C(T)=\C(S)+1.$
 \end{lemma}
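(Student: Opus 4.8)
\textbf{Proof plan for Lemma \ref{lemmaA}.}

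The plan is to verify each assertion directly from the definition of $T=(\{\m(S)\}+S)\cup\{0\}$, which is obtained from $S$ by shifting every element up by $\m(S)$ and then re-adjoining $0$. First I would check that $T$ is a numerical semigroup: it contains $0$ by construction; it is closed under addition since for $a,b\in S$ we have $(\m(S)+a)+(\m(S)+b)=\m(S)+(\m(S)+a+b)$, and $\m(S)+a+b\in S$ because $S$ is a submonoid, so the sum lies in $\{\m(S)\}+S\subseteq T$ (the cases where one summand is $0$ being trivial); and $\N\setminus T$ is finite because $\N\setminus T=(\N\setminus(\{\m(S)\}+S))$, which is contained in $\{1,\dots,\F(S)+\m(S)\}$. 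Since every integer $x>\F(S)+\m(S)$ satisfies $x-\m(S)>\F(S)$, hence $x-\m(S)\in S$ and $x\in\{\m(S)\}+S\subseteq T$, the complement is finite.

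Next I would compute $\m(T)$. The nonzero elements of $T$ are exactly $\{\m(S)+s\mid s\in S\}$; the smallest such is obtained at $s=0$, giving $\m(S)$. Hence $\m(T)=\m(S)$. For the Frobenius number, note that $x\in T\setminus\{0\}$ iff $x-\m(S)\in S$, so $x\notin T$ (for $x\geq 1$) iff $x-\m(S)\notin S$, i.e.\ iff $x-\m(S)\in\Z\setminus S$. The largest such $x$ is $\F(S)+\m(S)$ (noting $\F(S)+\m(S)\geq\m(S)\geq 1$, so it is genuinely a positive gap), giving $\F(T)=\F(S)+\m(S)$.

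Finally, the complexity formula follows immediately from Corollary \ref{corollary22}:
$$\C(T)=\left\lfloor\frac{\F(T)}{\m(T)}\right\rfloor+1=\left\lfloor\frac{\F(S)+\m(S)}{\m(S)}\right\rfloor+1=\left\lfloor\frac{\F(S)}{\m(S)}\right\rfloor+1+1=\C(S)+1.$$
Here I use that $\lfloor q+1\rfloor=\lfloor q\rfloor+1$ for rational $q$, applied with $q=\F(S)/\m(S)$. One edge case to address is $S=\N$: then $\F(S)=-1$, $\m(S)$ is not defined in the usual sense, and Corollary \ref{corollary22} does not literally apply; I would either tacitly assume $S\neq\N$ throughout (consistent with the paper's conventions elsewhere, e.g.\ Proposition \ref{proposition1}) or check directly that if $S=\N$ then $T=\{0,1,\to\}=\N$ and the multiplicity/Frobenius statements degenerate. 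I do not expect any real obstacle here; the only mild care needed is in the floor computation and the handling of the trivial case, since all the substantive work has already been done in Corollary \ref{corollary22}.
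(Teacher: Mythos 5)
Your proof is correct and fills in, in full detail, a verification the paper itself omits (it only says the result ``easily follows''); direct verification from the definition of $T$ plus Corollary~\ref{corollary22} for the complexity formula is exactly the intended route. Two small remarks: the equality $\N\setminus T=\N\setminus(\{\m(S)\}+S)$ is off by the element $0$ (which lies in the right-hand side but not the left), though this does not affect the finiteness argument since the relevant inclusion $\N\setminus T\subseteq\{1,\dots,\F(S)+\m(S)\}$ still holds; and you are right to single out $S=\N$, for which $T=\N$ and hence $\F(T)=-1\neq 0=\F(S)+\m(S)$ and $\C(T)=0\neq 1=\C(S)+1$, so the lemma as stated tacitly assumes $S\neq\N$ (this is consistent with how it is used later, where one works inside $\CL_m$ for $m\geq 2$).
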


 The following proposition can be  deduced from  previous lemma.

 \begin{proposition}\label{propositionB}
 The map $f:\{S\in \CL_m\mid \C(S)=c\}\to \{S\in \CL_m\mid \C(S)=c+1\}$ defined by $f(S)=(\{\m(S)\}+S)\cup \{0\}$, is injective.
 \end{proposition}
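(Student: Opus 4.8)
The plan is to prove that $f$ is well-defined and injective by leaning entirely on Lemma~\ref{lemmaA}. First I would check that $f$ maps into the stated target set: given $S\in\CL_m$ with $\C(S)=c$, Lemma~\ref{lemmaA} tells us that $T=f(S)=(\{\m(S)\}+S)\cup\{0\}$ is a numerical semigroup with $\m(T)=\m(S)=m$ and $\C(T)=\C(S)+1=c+1$, so indeed $f(S)\in\{S\in\CL_m\mid \C(S)=c+1\}$. This part is immediate once the lemma is granted.

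For injectivity, the natural approach is to exhibit a left inverse, i.e. to recover $S$ from $T=f(S)$. The key observation is that $T\setminus\{0\}=\{m\}+S$, so that $S=\{t-m\mid t\in T\setminus\{0\}\}\cup\{0\}$; equivalently $S=\big((T\setminus\{0\})-m\big)\cup\{0\}$. I would verify this identity directly: if $t\in T\setminus\{0\}$ then $t=m+s$ for a unique $s\in S$ (uniqueness is clear since we are in $\Z$), so $t-m=s\in S$; conversely every $s\in S$ arises as $t-m$ with $t=m+s\in\{m\}+S=T\setminus\{0\}$. Hence the map $g\colon\{S\in\CL_m\mid\C(S)=c+1\}\to\scrP(\N)$ given by $g(T)=\big((T\setminus\{0\})-m\big)\cup\{0\}$ satisfies $g(f(S))=S$ for every $S$ in the domain, which forces $f$ to be injective: if $f(S)=f(S')$ then $S=g(f(S))=g(f(S'))=S'$.

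Alternatively, and perhaps more cleanly for the write-up, injectivity follows without constructing $g$ at all: if $f(S)=f(S')$ then $\{m\}+S=\{m\}+S'$ as subsets of $\Z$ (both equal $f(S)\setminus\{0\}=f(S')\setminus\{0\}$, using that $m\notin\{m\}+S$ since $0$ is the only element of $S$ below $m$), and translation by $-m$ is injective on $\Z$, so $S=S'$. I would present this shorter argument. The only mild subtlety to address is confirming that $0\notin\{m\}+S$, i.e. that $0$ really is an isolated point added back in when forming $T$; this holds because every nonzero element of $S$ is at least $m>0$, so $\{m\}+S\subseteq\{2m,\rightarrow\}$ has minimum $\ge 2m>0$.

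I do not expect any genuine obstacle here: the statement is essentially a formal consequence of Lemma~\ref{lemmaA} together with the injectivity of integer translation, and the proof is a few lines. The one place to be careful is the bookkeeping around the element $0$ when passing between $S$, $\{m\}+S$, and $T=(\{m\}+S)\cup\{0\}$, to make sure the translation argument is applied to the correct sets. Everything else (that $f(S)$ lands in the right complexity class, that the multiplicity is preserved) is already packaged in the lemma.
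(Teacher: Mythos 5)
Your proof is correct and takes the natural route the paper intends: the paper itself gives no explicit proof of Proposition~\ref{propositionB}, just the remark that it ``can be deduced from'' Lemma~\ref{lemmaA}, and that deduction is precisely what you spell out --- well-definedness via the lemma, injectivity by recovering $S$ from $f(S)\setminus\{0\}$ by translating back by $-m$.

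There are two small slips that you should fix before writing it up, though neither affects the overall argument. First, in the parenthetical in your ``cleaner'' version you assert $m\notin\{m\}+S$; this is false, since $0\in S$ gives $m=m+0\in\{m\}+S$. The fact you actually need (and which you correctly name just afterward) is $0\notin\{m\}+S$. Second, in your ``mild subtlety'' paragraph you claim $\{m\}+S\subseteq\{2m,\rightarrow\}$ with minimum $\ge 2m$; again because $0\in S$, the minimum of $\{m\}+S$ is $m$, not $2m$ --- and indeed this is exactly why Lemma~\ref{lemmaA} can conclude $\m(T)=\m(S)$. The correct and sufficient observation is simply that $\{m\}+S\subseteq\{m,\rightarrow\}$, which already gives $0\notin\{m\}+S$ and hence $f(S)\setminus\{0\}=\{m\}+S$. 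With those corrections the translation-injectivity argument goes through cleanly.
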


 As an immediate consequence from above proposition, we have the following result.

 \begin{corollary}\label{corollaryC}
 If $c\in \N$ and $m\in \N\backslash \{0\}$, then the cardinality of $\{S \in \CL_m\mid \C(S)=c\}$ is less than or equal to the cardinality of $\{S\in \CL_m\mid \C(S)=c+1\}.$
 \end{corollary}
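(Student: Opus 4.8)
The plan is to obtain Corollary~\ref{corollaryC} as a purely formal consequence of Proposition~\ref{propositionB}. That proposition produces an injective map $f\colon\{S\in\CL_m\mid\C(S)=c\}\to\{S\in\CL_m\mid\C(S)=c+1\}$, and the existence of an injection between two sets is precisely the assertion that the cardinality of the source does not exceed that of the target. The only point worth spelling out is that both sets in question are finite, so that the comparison is an honest comparison of natural numbers: by Corollary~\ref{corollary28} the set $\{S\in\CL_m\mid\C(S)=k\}$ is the set of vertices of $G(m)$ at distance $k-1$ from the root, and by Proposition~\ref{proposition29} each vertex of $G(m)$ has only finitely many children, so by induction every level $\NN(G(m),n)$ is finite. (Alternatively one can dispense with finiteness altogether and argue in cardinal arithmetic, where an injection still forces the inequality $\leq$.)

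For completeness I would also recall why the underlying map behaves as claimed, i.e.\ sketch Lemma~\ref{lemmaA} and Proposition~\ref{propositionB}. Given $S$ with $\m(S)=m$, put $T=(\{m\}+S)\cup\{0\}$. Because $m\in S$, the set $T$ is closed under addition and contains $0$; moreover for $n\geq m$ one has $n\in T$ if and only if $n-m\in S$, so $\N\backslash T$ is finite and $T$ is a numerical semigroup. Its least nonzero element is $m$, hence $\m(T)=m$; and the largest integer outside $T$ is $\F(S)+m$, hence $\F(T)=\F(S)+m$. Applying Corollary~\ref{corollary22} gives $\C(T)=\lfloor\F(T)/\m(T)\rfloor+1=\lfloor(\F(S)+m)/m\rfloor+1=\lfloor\F(S)/m\rfloor+2=\C(S)+1$. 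Finally $f$ is injective since $T\backslash\{0\}=\{m\}+S$ recovers $S$ uniquely.

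The step that carries the actual content is the identity $\C(T)=\C(S)+1$, and it is painless here only because the closed formula of Corollary~\ref{corollary22} is already available; without it one would have to reason directly with $\ii$-chains. The deduction of the corollary itself is then routine, so I do not anticipate any real obstacle. The one thing to be mildly careful about is the degenerate case $m=1$, where $\CL_m=\{\N\}$ and the shift fixes $\N$ so that Lemma~\ref{lemmaA} fails; this is why the statement should be read with $m\in\N\backslash\{0,1\}$, in line with the definition of $\CL_m$, and then the inequality holds for all $c\in\N$ (for $c=0$ the source is empty since $\N\notin\CL_m$).
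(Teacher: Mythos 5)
Your argument matches the paper's: the corollary is stated there as an immediate consequence of the injectivity of $f$ from Proposition~\ref{propositionB}, which is exactly your deduction (and your finiteness aside, while harmless, is dispensable since an injection forces $\leq$ in cardinal arithmetic regardless). Your observation about $m=1$ is a genuine catch: for $m=1$ and $c=0$ the source $\{\N\}$ is nonempty while the target is empty, so the hypothesis $m\in\N\backslash\{0\}$ in the statement should read $m\in\N\backslash\{0,1\}$, consistent with the definition of $\CL_m$ and with the failure of Lemma~\ref{lemmaA} at $S=\N$.
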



\end{document}